\let\@@seccntformat\@seccntformat
\renewcommand*{\@seccntformat}[1]{%
  \expandafter\ifx\csname @seccntformat@#1\endcsname\relax
    \expandafter\@@seccntformat
  \else
    \expandafter
      \csname @seccntformat@#1\expandafter\endcsname
  \fi
    {#1}%
}
\newcommand*{\@seccntformat@section}[1]{%
  \S\csname the#1\endcsname\quad
}
\renewcommand{\tocsection}[3]{%
  \indentlabel{\@ifnotempty{#2}{\S~\ignorespaces#1 #2.\quad}}#3}
\newtheorem*{theor}{Théorème}
\newtheorem{theo}{Théorème}
\newtheorem*{pro}{ Proposition}
\newtheorem*{lem}{ Lemme}
\newtheorem*{coro}{Corollaire}
\newtheorem{theoreme}{Théorème}[section]
\newtheorem{proposition}{ Proposition}[section]
\newtheorem{lemme}{ Lemme}[section]
\newtheorem{corollaire}{Corollaire}[section]
\newtheorem{definition}{Définition}[section]
\theoremstyle{remark}
\newtheorem{exemple}{\it Exemple}
\def \1{\mathbb {1}}
\def \RM{\mathbb {R}}
\def \NM{\mathbb{N}}
\def \ZM{\mathbb{Z}}
\def \CM{\mathbb{C}}
\def \Ker {{\rm Ker\,}}
\def \Im {{\rm Im\,}}
\def \Der {{\rm Der\,}}
\def \p {{\rm exp\,}}
\def \Id {{\rm Id\,}}
\def \id {{\rm id\,}}
\def \d{\partial}
\def\dt{\delta} 
\def\a{\alpha}
\def\l{\lambda}
\def\p{\varphi}
\def\G{\Gamma}   
\def\D{\Delta}
\def \s{\sigma}
\def \to{\longrightarrow} 
\def \alg{\mathfrak{g}}
\def \< {{\langle }}
\def \> {{\rangle }}
\def \( {\left( }
\def \) {\right) }
\newcommand{\Bt}{{\mathcal B}}
\newcommand{\Ft}{{\mathcal F}}
\newcommand{\Gt}{{\mathcal G}}
\newcommand{\Lt}{{\mathcal L}}
\newcommand{\Mt}{{\mathcal M}}
\newcommand{\Ot}{{\mathcal O}}
\newcommand{\Rt}{{\mathcal R}}
\newcommand{\Vt}{{\mathcal V}}
\renewcommand{\mod}{{\rm  mod\,}}
\title[D\'etermination finie]{D\'etermination finie sur un espace de Stein}
\author{ Mauricio  Garay}
\address{Institut für Mathematik\\
FB 08 - Physik, Mathematik und Informatik\\
Johannes Gutenberg-Universität Mainz\\
Staudinger Weg 9\\
55128 Mainz}
\begin{document}
\maketitle
\begin{abstract} On généralise le théorème de détermination finie sur les singularités isolés au cas des espaces analytiques de Stein.
\end{abstract} 
\section*{Introduction}
 La démonstration classique du théorème de détermination finie, pour les singularités isolées, utilise fortement les propriétés de l'algèbre locale. Elle ne s'adapte donc pas de manière directe au cas des variétés de Stein, à moins d'hypothèses drastiques. Le but de cet article est d'étendre et de généraliser ce théorème au cas d'une fonction sur une variété de Stein pour un idéal quelconque.

Avant dénoncer ce théorème, commençons par rappeler l'énoncé du théorème classique dans un cadre holomorphe. Notons $\Ot_n$ l'algèbre  des germes de fonctions analytiques à l'origine dans $\CM^n$.  L'{\em idéal jacobien du germe $f$}, noté $Jf$, est l'idéal de $\Ot_n$ engendré par les dérivées partielles de $f$. Il revient au même de dire que l'origine est un point critique isolé de $f$ ou bien que l'espace vectoriel
 $\Ot_n/Jf$ est de dimension finie. Le nombre 
 $$ \mu(f):=\dim_{\CM} \Ot_n/Jf$$
 est alors appelé le {\em nombre de Milnor} de $f$. 
 
L'anneau $\Ot_n$ est local et on note $\Mt_n$ l'idéal maximal des germes qui s'annulent en l'origine. La puissance $k$-ième de cet idéal maximal est égale aux germes dont le développement en série de Taylor à l'origine s'annule à l'ordre $k$.
 
 \begin{theor}[\cite{Mather_fdet,Tougeron,Tyurina}]  Pour tout germe de fonction holomorphe $f \in \Ot_n$ et tout germe 
 $g \in \Mt^{\mu(f)+2}_n$,  il existe un germe d'application biholomorphe
 $$\p:(\CM^n,0) \to ( \CM^n,0) $$
 tel que $f \circ \p=f+g$.
  \end{theor}
Par exemple, pour une fonction $f$ avec un point critique non-dégénéré à l'origine, on a $\mu(f)=1$. Le germe $f$ se ramène alors à un polynôme de degré $2$, par un changement de variables holomorphe. C'est le lemme de Morse complexe.

Considérons à présent le cas plus général qui fait l'objet de cet article. Soit $K \subset X$ un compact d'un espace analytique complexe $X$.
Pour tout voisinage $U,U'$ de $K$ avec $U \subset U'$, la restriction induit un morphisme d'algèbres
$$\G(U',\Ot_X) \to \G(U,\Ot_X) $$
où $\G(-,-)$ désigne le foncteur des sections globales. On a ainsi un système direct dont on note $\Ot_{X,K}$ la limite. Un élément de $\Ot_{X,K}$ est une fonction holomorphe définie sur un voisinage de $K$ dans $X$, mais on ne précise pas quel est ce voisinage. On note $\Mt_K \subset \Ot_{X,K}$ l'idéal des fonctions identiquement nulles sur $K$. 

 On dit que $K$ est un {\em compact de  Stein} s'il admet un système fondamental de voisinages de Stein.
Pour tout idéal $I  \subset \Ot_{X,K}$, on a une filtration
$$\Ot_{X,K} \supset I \supset I^2 \supset \dots .$$
Pour tout élément $f$ de $\Ot_{X,K}$, on note $I(f)$ l'image de l'application
$$\Mt^2_K\otimes_{\Ot_{X,K}} \Der_{\Ot_{X,K}}(I) \to \Ot_{X,K},\ a \otimes v \mapsto av(f). $$

 \begin{theo} 
 \label{T::det}
 Soit $K$ un compact de Stein d'un espace analytique complexe $X$ et $f \in \Ot_{X,K}$ un germe de fonction holomorphe. Supposons qu'il existe $\nu \in \NM$ tel que $I^{\nu}$ soit contenu dans l'idéal $I(f)$. Pour tout germe 
 $g \in I^{\nu}$,  il existe un germe d'application biholomorphe
 $$\p:(X,K) \to (X,K) $$
 tel que $f \circ \p=f+g$. 
  \end{theo}
 La démonstration que je donnerai repose fortement sur les propriétés des variétés de Stein. Par conséquent, je ne sais pas si, à l'instar du résultat classique, ce théorème reste vrai dans un cadre $C^\infty$.  En revanche, on a une variante analytique réelle immédiate de ce résultat.

On définit l'idéal jacobien de $f$ comme l'image de l'application
$$\Der_{\Ot_{X,K}}(\Ot_{X,K}) \to \Ot_{X,K},\ v \mapsto v(f). $$
Pour l'idéal $\Mt^k$, on peut améliorer la borne $\nu$ donnée par le théorème précédent~:
 \begin{theo}
 \label{T::det_bis}Soit $K$ un compact de Stein d'un espace analytique complexe $X$ et $f \in \Ot_{X,K}$ un germe de fonction holomorphe. Supposons qu'il existe $\mu \in \NM$ tel que $\Mt^\mu_K$ soit contenu dans l'idéal jacobien de $f$. Pour tout germe 
 $g \in \Mt^{\mu+2}_K$,  il existe un germe d'application biholomorphe
 $$\p:(X,K) \to (X,K) $$
 tel que $f \circ \p=f+g$.
  \end{theo}
  
 Pour tout ideal $I$ de codimension $k$ dans un anneau local $A$, la puissance $k$-ième de l'idéal maximal est contenue dans $I$. Ainsi, dans le cas où $X=\CM^n$ et $K$ est un point, on retrouve le théorème de détermination finie classique.

\`A titre d'exemple, prenons $X=\left( \CM/2\pi \ZM \right) \times \CM=\{ (\theta,r) \}$ et soit  $K$  le cercle réel~:
$$K =\{ (\theta,r) \in \left(\CM/2\pi \ZM \right) \times \CM : \theta \in \RM/2\pi \ZM,\ r=0 \}.$$  Les éléments de $\Ot_{X,K}$ sont des séries de la forme $\sum_{m,n} a_{m,n}r^m e^{i\, n\theta}$. Prenons 
$$f:(r,\theta) \mapsto r^k.$$
Le théorème affirme alors que toute série de la forme
$r^k+r^{k+1}g(r,\theta) $ se ramène à $r^k$ par un changement de variables biholomorphe. Ce qui se vérifie directement en utilisant le changement de variables
$$ \p:(\theta,r) \mapsto (\theta, r \sqrt[k]{1+rg(r,\theta)}).$$

Chacun des théorèmes précédents peut-être reformulé en termes d'actions de groupes, par exemple~:
\begin{theo} Soit $I$ un idéal de $\Ot_{X,K}$, $f \in \Ot_{X,K}$ et $\nu$ tel que $I^{\nu} \subset I(f)$. L'espace  affine $f+I^{\nu}$ est contenu dans l'orbite de $f$ sous l'action du groupe des automorphismes de l'algèbre $\Ot_{X,K}$.
\end{theo}

 Le module des dérivations de l'algèbre $\Ot_{X,K}$ constitue l'analogue, en dimension infinie,
 de l'algèbre de Lie de son groupe d'automorphismes. On semble donc reconnaître un théorème qui relie l'action d'un groupe avec celle de sa linéarisation. Ce point de vue heuristique est développé rigoureusement dans cet article, prolongeant ainsi le résultat de \cite{groupes}.


 \section{La catégorie des espaces vectoriels échelonnés}
\subsection{Echelonnement d'un espace vectoriel topologique}
\label{SS::definition}

Une {\em $S$-échelle de Banach} est une famille décroissante d'espaces de Banach $(E_s)$, $s \in ]0,S[$, telle que 
les inclusions 
$$E_{s+\s} \subset E_s,\ s \in ]0,S[,\ \s \in ]0,S-s[$$ 
soient de norme au plus~$ 1$.

Soit $E$ un espace vectoriel topologique. Un {\em $S$-échelonnement} de $E$ est une échelle  $(E_s)$
de sous-espaces de Banach de $E$ telle que
\begin{enumerate}[{\rm i)}] 
\item  $\displaystyle{E=\lim_{\to} E_s=\bigcup_{s \in ]0,S[} E_s}$ ;
\item la topologie  limite directe de la topologie des espaces de Banach $E_s$ coïncide avec celle de $E$.
\end{enumerate}
 L'intervalle $]0,S[$ s'appelle {\em l'intervalle d'échelonnement}.  Si $F$ est un sous-espace vectoriel fermé d'un espace vectoriel échelonné $E$ alors $E/F$ est échelonné par les espaces de Banach
$E_s/(E \cap F)_s$.
  
La notion d'échelonnement vise à transférer les propriétés de $E$ aux espaces de Banach $E_s$, mais l'objet que l'on étudie reste $E$ et non pas l'échelle de Banach. Lorsque le paramètre $S$ ne joue pas de rôle particulier, nous parlerons simplement d'échelle de Banach ou d'espace vectoriel échelonné. 

Pour un ensemble $A \subset \CM^n$, nous noterons $ \mathring{A}$ son intérieur. Une suite croissante de compacts $K=(K_s),\  s \in [0,S]$ est appelé {\em une famille exhaustive  de compacts}  si $K_s$ est contenu l'intérieur de $K_{s'}$ pour tout $s',s$ avec $s'>s$.

\begin{exemple}  Soit $(K_s), \ s \in [0,S]$ une  famille exhaustive de compacts de $\CM^n$.  Les espaces vectoriels
$$E_s:=C^0(K_s,\CM) \cap \G(\mathring{K_s},\Ot_{\CM^n,0} )$$ 
sont des espaces de Banach pour la norme
$$| f |_s:=\sup_{z \in K_s} |f(z)|. $$
Ils définissent un échelonnement de l'espace vectoriel topologique $\Ot_{\CM^n,K}$. 
Les suites $(E_{s^2})$, $(E_{3s})$ donnent d'autres exemples d'échelonnement de $\Ot_{\CM^n,K}$, avec les mêmes espaces de Banach~(voir section \ref{S::analytique} pour plus de détails).
\end{exemple}

L'utilisation d'échelles de Banach en analyse remonte aux fondements de l'analyse fonctionnelle. On la trouve par exemple dans la démonstration du théorème de Cauchy-Kovalevskaïa donnée en 1942 par Nagumo~\cite{Nagumo} (voir également \cite{Ovsyannikov}).
 Elle est également à la base de la démonstration proposée par Kolmogorov du théorème des tores invariants~\cite{Kolmogorov_KAM}. 
 
 Cependant ces auteurs ne considèrent qu'une échelle fixe, l'idée de considérer toutes les échelles possibles d'un sous-espace vectoriel topologique est déjà présente dans la thèse de Grothendieck~\cite{Grothendieck_PTT}. En revanche, Grothendieck n'utilise pas le choix d'un paramétrage de l'échelle comme une donnée supplémentaire.
 \subsection{Filtration d'un espace vectoriel échelonné}
 Soit $E$ un espace vectoriel échelonné. Les sous-espaces vectoriels
 $$E^{(k)}=\{ x \in E: \exists C,\tau,\ | x|_s \leq Cs^k,\ \forall s \leq \tau \}$$
 filtrent l'espace $E$~:
 $$E:=E^{(0)} \supset E^{(1)}\supset E^{(2)} \supset \cdots. $$
 \begin{exemple} Considérons les polycylindres
 $$K_s=\{ (z_1,z_2,\dots,z_n) \in \CM^n: | z_1| \leq s\, ,\dots,\ | z_n | \leq s \}.$$ 
 Comme précédemment, échelonnons l'espace vectoriel 
 $$\Ot_n:=\Ot_{\CM^n,K}, K=\{ 0 \} \subset \CM^n$$ par les espaces de Banach
 $$E_s:=C^0(K_s,\CM) \cap \G(\mathring{K_s},\Ot_{\CM^n,0} ).$$
   L'anneau $\Ot_{\CM^n,K}$ est local d'idéal maximal
 $$\Mt_n:=\{ f \in \Ot_{\CM^n,K}: f(0)=0 \}. $$ La filtration d'espace vectoriel échelonné coïncide avec celle donnée par les puissances de l'idéal maximal~:
 $$(\Ot_n)^{(k)}=\Mt_n^k. $$
 \end{exemple} 

\begin{definition}Soit $E$ un espace vectoriel échelonné. L'ordre d'un vecteur $x \in E$ est le plus grand $k \geq 0$ tel que $x \in E^{(k)}$.
\end{definition}
\subsection{Morphismes d'un espace vectoriel échelonné}
\label{SS::morphismes}
Soit $E,F$ deux espaces vectoriels $S$-échelonnés.

Nous dirons d'une application linéaire que c'est  un {\em morphisme} entre des espaces vectoriels échelonnés $E,F$, si pour tout $s' \in ]0,S[$, 
il existe $s \in ]0,S[$ tel que l'espace de Banach $E_{s'}$ est envoyé continûment dans $F_s$. Nous avons ainsi définit la  {\em catégorie des espaces vectoriels échelonnés}.

Nous désignerons par $\Lt(E,F)$ l'espace vectoriel des morphismes de $E$ dans $F$ et lorsque $E=F$, nous utiliserons la notation $\Lt(E)$ au lieu de $\Lt(E,E)$. Il n'y pas de raison, a priori, pour que $\Lt(E,F)$ coïncide avec l'espace des applications linéaires continues de $E$ dans $F$, mais dans les exemples concrets que nous allons traiter ce sera toujours le cas.
 
   Si $\| \cdot \|$ désigne la norme d'opérateur sur l'espace de Banach $\Lt(E_{s'},F_s)$, nous noterons $\| u \|$ la norme de l'opérateur défini par restriction de $u$ à $E_{s'}$.

Le noyau d'un  morphisme $ u:E \to F$ entre espaces vectoriels $S$-échelonnés est un espace vectoriel $S$-échelonné par~:
$$ (\Ker u)_s=E_s \cap \Ker u,\ s \leq S.$$
 
Venons-en à la notion de convergence d'une suite de morphismes. La norme d'opérateur induit sur  les espaces vectoriels $\Lt(E_{s'},F_s)$, une structure d'espace de Banach. 
\begin{definition}Une suite de morphismes $(u_n)$ de $\Lt(E,F)$ converge vers un morphisme $u \in \Lt(E,F)$ si pour tout $s' \in ]0,S[$, il existe $s \in ]0,S[$ tel que la restriction de $(u_n)$ définisse une suite de $\Lt(E_{s'},F_s)$ qui converge vers la restriction de $u$.
\end{definition}
Un sous-ensemble $X$ de $\Lt(E,F)$ sera dit {\em fermé} si toute suite convergente de points de $X$ à sa limite dans $X$. (L'utilisation du mot «fermé» est légèrement abusive, car il ne s'agit pas a priori du complémentaire d'un ouvert.)
\begin{exemple}  Comme précédemment, échelonnons l'espace vectoriel 
 $$\Ot_n:=\Ot_{\CM^n,K}, K=\{ 0 \} \subset \CM^n$$ par les espaces de Banach
 $$E_s:=C^0(K_s,\CM) \cap \G(\mathring{K_s},\Ot_{\CM^n,0} ).$$
Fixons $\l>0$, l'application
$$u:\Ot_n \to \Ot_n,\ f \mapsto [z \mapsto f(\frac{z}{\l})] $$
est un morphisme de $L(\Ot_n)$ car
$$f \in C^0(K_s,\CM) \cap \G(\mathring{K}_s,\CM) \implies  u(f) \in C^0(K_{\l s},\CM) \cap \G(\mathring{K}_{\l\, s},\CM)  .$$ Plus généralement, on vérifie, sans difficultés, que  l'espace vectoriel $L(\Ot_n)$ coïncide avec celui des applications linéaires continues pour la topologie forte. 
\end{exemple}
 \subsection{  Morphismes born\'es}

\begin{definition} Un morphisme $ u \in \Lt(E,F)$ entre deux espace vectoriel échelonnés  est appelé  un $\tau$-morphisme
si pour tout $s'  \in ]0,\tau]$ et pour tout $s \in ]0,s'[$, on a l'inclusion $ u(E_{s'}) \subset F_s$ et $ u$ induit par restriction une application linéaire continue
$$u_{s',s}~:~E_{s'}~\to~F_s.$$
\end{definition}
On a alors des diagrammes commutatifs
$$\xymatrix{ & \ F_s \ar@{^{(}->}[d]\\
 E_{s'} \ar[r]^-{u_{\mid E_{s'}}} \ar[ru]^{u_{s',s}}  & F }
$$
pour tout $s'  \in ]0,\tau]$ et pour tout $s \in ]0,s'[$, la flèche verticale étant donnée par l'inclusion $F_s \subset F$.
\begin{exemple}  L'application $u$ construite dans l'exemple du n° précédent n'est pas un $\tau$-morphisme alors que tout opérateur
diffé\-ren\-tiel définit un $\tau$-morphisme.
\end{exemple}
\begin{definition}
\label{D::borne}
  Un $\tau$-morphisme $ u:E \to F$ d'espaces vectoriels $S$-échelonnés est dit 
    $k$-borné, $k \geq 0 $ s'il existe un réel $C>0$ tel que~:
  $$| u(x) |_s \leq C \sigma^{-k} | x |_{s+\sigma},\ {\rm pour\ tous\ }\ s \in ]0,\tau[,\ \s \in ]0,\tau-s],\ x\in E_{s+\s} . $$
\end{definition}
Un morphisme est dit {\em $k$-borné} (resp. {\em borné}) s'il existe $\tau$ (resp. $\tau$ et $k$) pour lequel (resp. lesquels) c'est un $\tau$-morphisme $k$-borné. Lorsque $E=E_s$ et $F=F_s$ sont des espaces de Banach, on retrouve la définition habituelle de morphismes bornés.
(Nous n'utiliserons pas la notion plus générale d'application linéaire bornée d'un espace localement convexe, notre terminologie ne devrait donc pas porter à confusion.)  
L'espace vectoriel des $\tau$-morphismes (resp. des morphismes)  $k$-bornés
entre $E$ et $F$ sera noté $\Bt^k_\tau(E,F)$ (resp. $\Bt^k(E,F)$). On note $N_\tau^k(u)$ la plus petite constante~$C$ vérifiant l'inégalité de la définition \ref{D::borne}.   
\begin{exemple} Considérons,  l'échelonnement de $\Ot_n$ définit à l'aide des polycylindres $K_s$. D'après les inégalités de Cauchy, tout opérateur diffé\-rentiel d'ordre $k$ est $k$-borné.
\end{exemple}

\begin{proposition}
\label{P::quotient}
Considérons un diagramme exact d'espaces vectoriels échelonnés
$$\xymatrix{ F_1 \ar[r] \ar[d]&E_1 \ar[r] \ar[d]^u& E_1/F_1 \ar[r] \ar[d]^v & 0\\
F_2 \ar[r] &E_2 \ar[r] & E_2/F_2 \ar[r]  & 0 }$$
Si $u$ est un $\tau$-morphisme $k$-borné alors $v$  est également un $\tau$-morphisme $k$-borné et de plus
$$N^k_\tau(v) \leq N^k_\tau(u). $$ 
\end{proposition}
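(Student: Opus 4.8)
L'idée est de relever tout vecteur du quotient en un représentant de norme presque minimale, puis d'appliquer l'hypothèse sur $u$. Je commencerais par rappeler que l'exactitude du diagramme signifie $u(F_1) \subset F_2$, de sorte que $v$ est bien défini par la formule $v(\overline{x}) = \overline{u(x)}$, et que l'échelonnement de $E_i/F_i$ est donné par les espaces de Banach $(E_i)_s/(F_i)_s$, où $(F_i)_s = (E_i)_s \cap F_i$, munis de la norme quotient $|\overline{y}|_s = \inf\{\, |y'|_s : y' \in (E_i)_s,\ y' \equiv y \bmod (F_i)_s \,\}$.

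Ensuite, je fixerais $s \in ]0,\tau[$, $\sigma \in ]0,\tau-s]$ et un vecteur $\overline{x} \in (E_1/F_1)_{s+\sigma}$. Pour $\varepsilon > 0$ arbitraire, je choisirais un représentant $x \in (E_1)_{s+\sigma}$ de $\overline{x}$ tel que $|x|_{s+\sigma} \leq |\overline{x}|_{s+\sigma} + \varepsilon$. Comme $u$ est un $\tau$-morphisme et que $s < s+\sigma \leq \tau$, on a $u(x) \in (E_2)_s$, et la classe de $u(x)$ modulo $(F_2)_s$ n'est autre que $v(\overline{x})$; en majorant la norme quotient par celle d'un représentant et en utilisant le caractère $k$-borné de $u$, on obtient $|v(\overline{x})|_s \leq |u(x)|_s \leq N^k_\tau(u)\, \sigma^{-k}\, |x|_{s+\sigma} \leq N^k_\tau(u)\, \sigma^{-k}\,(|\overline{x}|_{s+\sigma} + \varepsilon)$. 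Cela montre d'abord que $v(\overline{x}) \in (E_2/F_2)_s$, donc que $v$ est un $\tau$-morphisme, puis, en faisant tendre $\varepsilon$ vers $0$, que $v$ est $k$-borné avec $N^k_\tau(v) \leq N^k_\tau(u)$.

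Le seul point qui demande un peu de soin est l'identification de la classe de $u(x)$ dans $(E_2)_s/(F_2)_s$ avec la valeur $v(\overline{x})$ lue dans l'échelonnement de $E_2/F_2$; elle repose sur l'égalité $(F_2)_s = (E_2)_s \cap F_2$, qui rend injective la flèche naturelle $(E_2)_s/(F_2)_s \to E_2/F_2$ et garantit que la norme de $v(\overline{x})$ dans $(E_2/F_2)_s$ est majorée par la norme de n'importe lequel de ses représentants choisis dans $(E_2)_s$. Une fois ce point acquis, la preuve n'est qu'une traduction des définitions de l'échelonnement quotient, de la norme quotient et de la notion de $\tau$-morphisme $k$-borné; je ne vois pas d'obstacle sérieux.
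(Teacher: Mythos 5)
Votre démonstration est correcte et correspond exactement à l'argument standard que l'article qualifie d'« immédiat » sans le détailler : relèvement d'un vecteur du quotient en un représentant de norme presque minimale, application du caractère $k$-borné de $u$, puis majoration de la norme quotient par celle du représentant $u(x)$. Le point que vous signalez sur l'injectivité de $(E_2)_s/(F_2)_s \to E_2/F_2$ est bien le seul détail à vérifier, et votre traitement en est correct.
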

La démonstration est immédiate.

\subsection{L'échelle de Banach $(\Bt^k_\tau(E,F))$.}
\begin{proposition} Si $E,F$ sont des espaces vectoriels $S$-échelonnés alors les espaces vectoriels normés
$(\Bt^k_\tau(E,F),N_\tau^k),\ \tau \in ]0,S[$, forment une $S$-échelle de Banach.  
\end{proposition}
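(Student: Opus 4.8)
Le plan est de v\'erifier, pour la famille $\left( \Bt^k_\tau(E,F), N^k_\tau \right)$, $\tau \in ]0,S[$, les deux propri\'et\'es d\'efinissant une $S$-\'echelle de Banach~: d'une part la d\'ecroissance de la famille avec des inclusions de norme au plus $1$, d'autre part la compl\'etude de chacun des espaces norm\'es $\Bt^k_\tau(E,F)$. Le reste (la structure d'espace norm\'e) est formel et sera v\'erifi\'e en chemin.

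\textbf{D\'ecroissance.} Je commencerais par fixer $\tau < \tau'$ dans $]0,S[$. Un $\tau'$-morphisme est a fortiori un $\tau$-morphisme, car les param\`etres $s'\in\,]0,\tau]$, $s\in\,]0,s'[$ intervenant dans la d\'efinition parcourent un sous-ensemble de ceux relatifs \`a $\tau'$~; de m\^eme, l'in\'egalit\'e de la d\'efinition \ref{D::borne} exig\'ee au niveau $\tau'$ (pour tous $s\in\,]0,\tau'[$, $\s\in\,]0,\tau'-s]$) implique celle exig\'ee au niveau $\tau$. Toute constante $C$ convenant pour $u$ au niveau $\tau'$ convient donc au niveau $\tau$, d'o\`u $\Bt^k_{\tau'}(E,F)\subset\Bt^k_\tau(E,F)$ et $N^k_\tau(u)\le N^k_{\tau'}(u)$~; l'inclusion est ainsi de norme au plus $1$.

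\textbf{Compl\'etude.} Soit $(u_n)$ une suite de Cauchy de $\left(\Bt^k_\tau(E,F),N^k_\tau\right)$. Pour $s'\in\,]0,\tau]$ et $s\in\,]0,s'[$, en posant $\s=s'-s$, l'in\'egalit\'e $|u_n(x)-u_m(x)|_s\le N^k_\tau(u_n-u_m)\,\s^{-k}\,|x|_{s'}$ montre que les restrictions $u_n|_{E_{s'}}$ forment une suite de Cauchy de l'espace de Banach $\Lt(E_{s'},F_s)$~; je noterais $u^{(s',s)}$ sa limite. La continuit\'e des inclusions $F_{s_1}\hookrightarrow F_{s_2}$ ($s_1\ge s_2$) assure que ces limites sont compatibles lorsque $s'$ et $s$ varient, et comme tout $x\in E$ appartient \`a $E_{s'}$ pour $s'$ arbitrairement petit, elles se recollent en une application lin\'eaire $u:E\to F$ bien d\'efinie. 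Puisque l'inclusion $F_s\hookrightarrow F$ est continue, on passe \`a la limite $m\to\infty$ dans l'in\'egalit\'e ci-dessus~: on obtient d'abord, pour tous $s\in\,]0,\tau[$, $\s\in\,]0,\tau-s]$ et $x\in E_{s+\s}$, l'estimation $|u(x)|_s\le\left(\sup_m N^k_\tau(u_m)\right)\s^{-k}|x|_{s+\s}$, le supremum \'etant fini car une suite de Cauchy est born\'ee~; ceci prouve que $u$ est un $\tau$-morphisme $k$-born\'e, avec en particulier $u(E_{s'})\subset F_s$ pour $s'\le\tau$, $s<s'$, et $u\in\Lt(E,F)$ en factorisant $E_{s'}\hookrightarrow E_{\tau/2}\to F_s$ lorsque $s'>\tau$. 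On obtient ensuite, \`a $\e>0$ fix\'e et $N$ tel que $N^k_\tau(u_n-u_m)\le\e$ pour $n,m\ge N$, l'in\'egalit\'e $|u_n(x)-u(x)|_s\le\e\,\s^{-k}|x|_{s+\s}$ pour $n\ge N$, soit $N^k_\tau(u_n-u)\le\e$~; donc $u_n\to u$ dans $\Bt^k_\tau(E,F)$. Enfin $N^k_\tau$ est bien une norme~: si $N^k_\tau(u)=0$, en choisissant pour chaque $x$ des param\`etres admissibles en dessous de son niveau on trouve $u(x)=0$.

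\textbf{Obstacle principal.} La seule \'etape demandant un peu de soin est la construction de la limite $u$ sur $E$ tout entier et le contr\^ole des inclusions $u(E_{s'})\subset F_s$ qui font de $u$ un v\'eritable $\tau$-morphisme~; l'ingr\'edient essentiel y est pr\'ecis\'ement la continuit\'e (norme $\le 1$) des inclusions de l'\'echelle $F_{s_1}\hookrightarrow F_{s_2}$, qui force l'unicit\'e et la compatibilit\'e des limites prises dans les divers $F_s$. Tout le reste n'est que la transcription, param\`etre par param\`etre, de la preuve classique de la compl\'etude d'un espace d'op\'erateurs born\'es.
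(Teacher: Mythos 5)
Votre preuve est correcte et suit essentiellement la m\^eme d\'emarche que celle du texte~: on montre que les restrictions d'une suite de Cauchy forment des suites de Cauchy dans les espaces de Banach $\Lt(E_{s'},F_s)$, puis on utilise la bornitude de la suite $\left(N^k_\tau(u_n)\right)$ pour v\'erifier que la limite est encore un $\tau$-morphisme $k$-born\'e. Vous explicitez en plus la d\'ecroissance de l'\'echelle et le fait que $N^k_\tau$ est une norme, points que le texte consid\`ere comme imm\'ediats.
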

\begin{proof}
  La seule difficulté consiste à montrer que l'espace vectoriel $\Bt^k_\tau(E,F)$ est complet pour la norme $N_\tau^k$, pour tout $\tau \in]0,S[$.
  
 Je dis que toute suite de Cauchy  $(u_n) \subset \Bt^k_\tau(E,F)$  converge vers un morphisme $u \in \Lt(E,F)$ au sens de \ref{SS::morphismes}.
   Soit donc $s' \in ]0, \tau[$ et $s \in ]0,s'[$. Comme les $(u_n)$ sont des $\tau$-morphismes, ils induisent, par
 restriction, des applications linéaires continues 
 $$v_n:E_{s'} \to F_s.$$  
 Par définition de la norme $N_\tau^k$, la suite $(v_n)$ est de Cauchy dans l'espace de Banach $\Lt(E_{s'},F_s)$ donc convergente. Ceci démontre l'affirmation.  
   
 Montrons à présent que si une suite de Cauchy  $(u_n) \subset \Bt^k_\tau(E,F)$  converge vers un morphisme $u \in \Lt(E,F)$ alors
 $u$ est dans $ \Bt^k_\tau(E,F)$. L'inégalité
  $$| N_\tau^k( u_n) - N_\tau^k( u_m) | \leq  N_\tau^k( u_n- u_m) $$
   montre que  la suite $(N_\tau^k(u_n))$ est de Cauchy dans $\RM$ donc majorée par un constante~$C>0$.
   On a alors les inégalités~:
   $$| u(x) |_s \leq | u(x)-u_n(x) |_s+C\s^{-k}| x |_{s+\s}, \ {\rm\ pour\ tout\ } n, $$ 
  pour tout $x \in E_s$, pour tout $s \in ]0,\tau]$ et pour tout $\s \in ]0,\tau-s]$. Par conséquent, le $\tau$-morphisme limite $ u$ est $k$-borné de norme au plus égale à $C$.  La proposition est démontrée.
\end{proof}  
La suite $(\Bt^k_\tau(E,F),N_\tau^k),\ \tau \in ]0,S[$ munit l'espace vectoriel $\Bt^k(E,F)$ d'une structure d'espace vectoriel échelonné. Ainsi, l'échelonnement des espaces vectoriels $E,F$ se propage à celui des espaces de morphismes bornés.


\section{L'application exponentielle}
\subsection{Convergence de la série exponentielle}
\label{SS::produits}
 \begin{proposition}
 Soit $u$ un $\tau$-morphisme $1$-borné d'un espace vectoriel échelonné $E$. Si l'inégalité $3N_s^1(u) ~<~s $ est satisfaite pour tout $s \leq \tau$ alors
 la série 
 $$e^u:=\sum_{j \geq 0}\frac{u^j}{j!} $$
 converge vers un morphisme de $E$, et plus précisément
$$| e^u x |_{\l s} \leq \sum_{j \geq 0} \frac{(3N^1_s(u))^j }{(1-\l)^j s^j} | x |_s=\frac{1}{1-\frac{3N^1_s(u) }{(1-\l) s} }   |x |_s$$
pour tous $\l \in ]0, 1-\frac{3N^1_s(u)}s[$, $s \in ]0, \tau]$ et $x \in E_s$.  
\end{proposition}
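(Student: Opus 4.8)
Le plan est de proc\'eder par une estimation de type Cauchy it\'er\'ee, dans la veine de Nagumo--Ovsyannikov. Je commencerais par fixer $s \in ]0,\tau]$ et par restreindre $u$ aux niveaux $\leq s$, ce qui le r\'ealise comme un $s$-morphisme $1$-born\'e pour lequel la constante $N_s^1(u)$ convient~; on utilise ici que $\tau \mapsto N_\tau^1(u)$ est croissante, ce qui est imm\'ediat sur la d\'efinition. Il restera alors, pour $x \in E_s$, \`a majorer $|u^j x|_{\lambda s}$ pour tout $j$, puis \`a sommer la s\'erie.

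Pour estimer $|u^j x|_{\lambda s}$, j'appliquerais $u$ successivement $j$ fois \`a $x$ en faisant d\'ecro\^itre le param\`etre d'\'echelonnement de $\sigma := (1-\lambda)s/j$ \`a chaque \'etape, de sorte qu'apr\`es les $j$ applications on a perdu au total $(1-\lambda)s$ et on se retrouve au niveau $\lambda s$. En appliquant l'in\'egalit\'e de la D\'efinition~\ref{D::borne} (avec $k=1$) \`a chaque \'etape (tous les niveaux interm\'ediaires \'etant $\leq s$, la constante $N_s^1(u)$ convient), on obtient
$$ |u^j x|_{\lambda s} \leq \bigl( N_s^1(u) \bigr)^j\, \sigma^{-j}\, |x|_s = \bigl( N_s^1(u) \bigr)^j \left( \frac{j}{(1-\lambda)s} \right)^j |x|_s . $$
Le d\'ecoupage uniforme est d'ailleurs optimal pour cette borne, par in\'egalit\'e arithm\'etico-g\'eom\'etrique, mais c'est sans importance. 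Il restera \`a diviser par $j!$ et \`a invoquer la majoration \'el\'ementaire $j^j/j! \leq e^j \leq 3^j$ (le terme d'indice $j$ de la s\'erie $e^j = \sum_{k \geq 0} j^k/k!$), d'o\`u
$$ \frac{|u^j x|_{\lambda s}}{j!} \leq \left( \frac{3\, N_s^1(u)}{(1-\lambda)s} \right)^j |x|_s . $$

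En sommant sur $j \geq 0$, la s\'erie $\sum_j u^j x/j!$ converge absolument dans l'espace de Banach $E_{\lambda s}$ et l'on obtient exactement la majoration annonc\'ee (sous ses deux formes), la s\'erie g\'eom\'etrique de raison $\frac{3N_s^1(u)}{(1-\lambda)s}$ convergeant d\`es que $\lambda < 1 - \frac{3N_s^1(u)}{s}$~; cet intervalle est non vide gr\^ace \`a l'hypoth\`ese $3N_s^1(u) < s$. Pour conclure que $e^u$ est un morphisme de $E$ et que les sommes partielles $\sum_{j \leq n} u^j/j!$ convergent vers lui au sens de~\ref{SS::morphismes}, il suffira d'observer que la m\^eme estimation de queue montre que, restreintes \`a $E_s$, ces sommes partielles forment une suite de Cauchy dans l'espace de Banach $\Lt(E_s, E_{\lambda s})$ muni de la norme d'op\'erateur, donc convergente, de limite \'egale \`a $x \mapsto e^u x$~; comme $s \leq \tau$ est arbitraire et $E = \bigcup_s E_s$, ceci fournit le morphisme $e^u \in \Lt(E)$. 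Il n'y a pas ici d'obstacle s\'erieux~: le seul point demandant un peu de soin est la gestion des constantes $N^1$ selon le niveau d'\'echelonnement, qui est r\'egl\'ee d'embl\'ee par la r\'eduction au $s$-morphisme, et le c\oe ur du calcul se ram\`ene au d\'ecoupage uniforme de la perte d'\'echelle joint \`a la majoration $j^j/j!\leq 3^j$.
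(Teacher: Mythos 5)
Votre d\'emonstration est correcte et suit pour l'essentiel la m\^eme voie que celle de l'article~: d\'ecoupage uniforme de la perte d'\'echelle $(1-\lambda)s$ en $j$ pas \'egaux pour obtenir $|u^j x|_{\lambda s} \leq \bigl(N^1_s(u)\bigr)^j \bigl(j/((1-\lambda)s)\bigr)^j |x|_s$ (c'est exactement le contenu du lemme sur les produits de morphismes born\'es), puis contr\^ole de $j^j/j!$ par $3^j$ et sommation de la s\'erie g\'eom\'etrique. La seule diff\'erence est anecdotique~: vous justifiez $j^j/j! \leq e^j \leq 3^j$ en reconnaissant un terme de la s\'erie de $e^j$, l\`a o\`u l'article passe par une repr\'esentation int\'egrale de la fonction $\Gamma$.
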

\begin{proof}
Si $u,v$ sont des morphismes,  respectivement $k$ et $k'$ borné, alors leur composition $u  v$ est
  $(k+k')$-borné et on a l'inégalité
  $$N_\tau^{k+k'}( u v) \leq 2^{k+k'} N_\tau^k(u)N_\tau^{k'}(v) .$$
  En effet~:
 $$| (u v)(x) |_s \leq N_\tau^k(u) \frac{2^k}{\sigma^{k}} |v (x)|_{s+\sigma/2} \leq   N_\tau^k(u)N_\tau^{k'}(v) \frac{2^{k+k'}}{\sigma^{k+k'}} |x|_{s+\sigma} $$
 pour  tout $x\in E_{s+\s}$. Plus généralement~:
\begin{lemme}
\label{L::morphismes}
Le produit de $n$ morphismes $k_i$ bornés $u_i,\ i=1,\dots,n$, est un morphisme $k $-borné avec $k:=\sum_{i=1}^n k_i$ et
  $$N_\tau^k( u_1  \cdots  u_n) \leq n^k \prod_{i=1}^n N_\tau^{k_i}(u_i)\, .$$
  De plus si tous les $u_i$ sont égaux à un morphisme $1$-borné $u$, on a~:
     $$\frac{N_\tau^n( u^n )}{n!} \leq 3^n  N_\tau^1(u)^n. $$
 \end{lemme} 
 \begin{proof}
La première partie du lemme s'obtient en découpant l'intervalle $[s,s+\s]$ en $n$ parties égales, comme nous l'avons fait précé\-demment pour $n=2$.
 Prenons tous les $u_i$ égaux et $1$-bornés. D'après le lemme, on a alors
  $$N_\tau^n( u^n ) \leq n^n  N_\tau^1(u)^n.$$ 
 Une variante de la formule de Stirling montre que
  $$ n^n  \leq 3^n n!\,.$$
En effet, en utilisant l'expression intégrale suivante de  la fonction $\G$~:
  $$n!=\G(n+1) =n^{n+1} \int_{0}^{+\infty} e^{n (\log t-t)} dt .$$
 et l'estimation
  $$  -\frac{1}{2}(t-1)^2-1 \leq  \log t-t,$$
  il vient~:
  $$\G(n+1) \geq  n^{n+1}e^{-n} \int_{0}^{+\infty} e^{-\frac{1}{2}(t-1)^2} dt \geq  n^{n+1}e^{-n} \geq n^n 3^{-n}. $$
 Ceci démontre que
   $$\frac{N_\tau^n( u^n )}{n!} \leq 3^n  N_\tau^1(u)^n. $$
   \end{proof}
Nous pouvons à présent conclure la démonstration de la proposition. On a 
$$| e^u x |_{\l s} \leq \sum_{j \geq 0} \frac{1}{j!}| u^j x |_{\l s}. $$
Le morphisme $u^j$ est $j$-borné et on a l'inégalité~:
$$ \frac{1}{j!} | u^j x |_{\l s} \leq \frac{(3N^1_s(u))^j }{(1-\l)^j s^j} | x |_s, $$
ce qui démontre la proposition.
\end{proof}

Finalement, remarquons que deux morphismes 1-bornés $u,v \in \Bt^1(E)$  qui commutent et qui satisfont aux conditions de la proposition précédente vérifient l'égalité
 $$e^{u+v}=e^{u}e^{v}.$$ En effet, si $u$ et $v$ commutent alors les suites
 $$A_n:=\sum_{j= 0}^n \frac{u^j}{j!},\ B_n=\sum_{j= 0}^n \frac{v^j}{j!}. $$
 vérifient
 $$A_nB_n=\sum_{j= 0}^n \frac{(u+v)^j}{j!} $$
et si des suites de morphismes $(A_n), (B_n)$ convergent respectivement vers $A,B$ alors $(A_nB_n)$ converge vers $AB$. Le cas particulier $v=-u$ montre que l'exponentielle d'un morphisme $1$-borné est inversible.

\begin{exemple} Considérons l'espace vectoriel $\Ot_{\CM,0}$ échelonné comme pré\-cé\-demment.
 L'exponentielle de $\l z \d_z$ converge et donne l'automorphisme
 d'algèbre
 $$z \mapsto (\sum_{n \geq 0} \frac{(\l z\d_z)^n}{n!} )z=e^\l z.  $$
 Plus généralement, toute dérivation de la forme 
$$zh(z)\d_z,\ h\in \Ot_{\CM,0} $$
est exponentiable. En résumé, l'algèbre de Lie $\alg$ des dérivations de $\Ot_{\CM,0}$ est filtrée
$$\alg \supset \alg^{(1)} \supset \alg^{(2)} \supset \cdots  $$
avec
$$ \alg^{(k)}=\{ z^kh(z)\d_z,\ h \in \Ot_{\CM,0} \}$$
et tout élément de $\alg^{(1)}$ est exponentiable.

Notons $\Mt_{\CM,0}$, l'idéal maximal de l'anneau local $\Ot_{\CM,0}$~:
$$\Mt_{\CM,0}=\{ f \in \Ot_{\CM,0}: f(0)=0 \}.$$
 Lorsque $v \in \alg^{(2)}$ et $f \in\Mt_{\CM,0}^k$,on a~:
 $$ e^v f=f+v \cdot f\ (\mod \Mt_{\CM,0}^{k+1}). $$
 Ce qui donne un sens précis au fait que l'action infinitésimale de $e^v$ est donnée par la dérivation le long de $v$. En général, ce n'est plus vrai si l'on fait seulement l'hypothèse $v \in \alg^{(1)} $. Par exemple pour $\displaystyle{v=-\frac{z}{2}\d_z}$ et $f=z^2$, on trouve~:
 $$e^v f=z^2-z^2+\frac{z^2}{2!}+\dots+(-1)^n\frac{z^2}{n!}+\dots=e^{-1} z^2$$
alors que $f+v ( f)=0$.   
\end{exemple}
\subsection{Théorème principal}
  \label{SS::exp}
\begin{theoreme} 
\label{T::produits}
Soit $E$ un espace vectoriel échelonné.   Soit $(u_n) \subset \Bt^1_\tau(E)$ une suite de $\tau$-morphismes $1$-bornés.
Si l'inégalité
$$3\sum_{i \geq 0} N_s^1(u_i) < s  $$
est vérifiée pour tout $s \leq \tau$ alors la suite $(g_n)$ définie par
$$g_n:=e^{u_n}e^{u_{n-1}}\cdots e^{u_0}  $$
converge vers un élément inversible de $\Lt(E)$.
 \end{theoreme}
 \begin{proof}
Commençons par le
 \begin{lemme} 
\label{L::produits}
Soit $(u_n)$ une suite de $\tau$-morphismes $1$-bornés exponentiables.
Pour tout $s \leq \tau$ et tout $x \in E_s$, on a l'inégalité
$$| g_n x |_{\l s} \leq  \frac{1}{1-\frac{3}{(1-\l)s}\sum_{i=0}^nN_s^1(u_i)} | x |_s $$
pourvu que 
$$\sum_{i=0}^n N_s^1(u_i) < \frac{(1-\l)s}{3} . $$
 \end{lemme}
\begin{proof}
Notons $C_{j,n} \subset \ZM^j$ l'ensemble des éléments $ i=(i_1,\dots,i_j)$ dont les coordonnées sont dans
 $\{0,\dots,n \}$.  On a alors la formule
 $$ \frac{1}{1-\a(\sum_{k=0}^n z_k)}=\sum_{j \geq 0} \a^j \sum_{i \in C_{j,n}}  z_i,\ z_i:=z_{i_1}z_{i_2}\cdots z_{i_j}$$
 pour tout $\a \in \RM$.

Pour tout $i=(i_1,\dots,i_j) \in C_{j,n}$ on note $\s(i)$ le vecteur dont les composantes sont obtenues à partir de $i$ par permutation pour que
$\s(i)_p \geq \s(i)_{p+1}$. 

On pose
$$u[i]:= u_{\s(i)_1} u_{\s(i)_2} \cdots u_{\s(i)_j},\ i \in C_{j,n}. $$
Développons $g_n$ en série puis regroupons les termes de la façon suivante~:
$$g_n=\sum_{j \geq 0}\frac{1}{j!} (\sum_{i \in C_{j,n}}  u[i]) =1+\sum_{i=0}^n u_i+\frac{1}{2}(\sum_{i=0}^n u_i^2
+\sum_{j=0}^n \sum_{i=j+1}^n 2 u_i u_j) +\dots .$$

Posons 
$$z_{i,s}:=N_s^1(u_{i_1})N_s^1(u_{i_2})\cdots N_s^1(u_{i_n}).$$
D'après le lemme  \ref{L::morphismes}, on a l'inégalité~:
$$\frac{1}{j!}N_s^j( u[i])  \leq 3^j\prod_{p=0}^jN_s^1(u_{i_p})= 3^j z_{i,s}$$
et par suite
$$\left| u[i] (x)\right|_{\l s} \leq   \left(\frac{ 3}{(1-\l)s} \right)^j z_{i,s}  | x|_s,\ \forall \l \in ]0,1[. $$
Posons
$$ \a=\frac{3}{(1-\l)s},$$
On obtient ainsi l'estimation
$$| g_n x |_{\l s} \leq  \left( \sum_{j \geq 0}\a^j \sum_{i \in C_{j,n}}  z_{i,s} \right)| x|_s=\frac{1}{1-\a(\sum_{k=0}^n z_{k,s})}| x|_s  .  $$
Ceci démontre le lemme.
 \end{proof}
 
 Achevons la démonstration du théorème. Pour cela, fixons $s~\in~]0,\tau]$.  
Par hypothèse, on a
$$3 \sum_{i \geq 0} N_s^1(u_i)<s .$$
On peut donc choisir $\l \in]0,1[$ tel que
$$3 \sum_{i \geq 0} N_s^1(u_i)<(1-\l)s.  $$

 Notons $\| \cdot \|_{\l}$ la norme d'opérateur dans $\Lt(E_s,E_{\l s})$. 
Le lemme précédent donne l'estimation
$$ \| g_n \|_\l \leq    \frac{1}{1-\frac{3}{(1-\l)s}\sum_{i \geq 0} N_s^1(u_i)}.$$
La suite $(g_n)$ définit donc, par restriction, une suite uniformément bornée d'opérateurs dans $\Lt(E_s,E_{\l s})$. 

Soit à présent $\mu \in ]0,1[$ tel que
 $$3\sup_{i \geq 0} N_{\l s}^1(u_i) <(1-\mu)\l s .$$ 
 Nous allons montrer que la suite $(g_n)$ définit, par restriction, une suite de Cauchy dans $\Lt(E_s,E_{\mu \l s})$. La proposition en découlera, car ce dernier est un espace de Banach pour la norme d'opérateur. 

Je dis que la série de terme gé\-né\-ral $ \| g_n-g_{n-1}\|_{\l \mu }$ est convergente. Pour le voir, écrivons
$$g_n-g_{n-1}=(e^{u_n}-\Id)g_{n-1} $$
où $\Id \in \Lt(E)$ désigne l'application identité.

En développant l'exponentielle en série, on obtient l'inégalité~:
$$ | (e^{ u_n}-\Id) y |_{\l \mu s} \leq  \left( \sum_{j \geq 0} \frac{ ( 3 N_{\l s}^1(u_n))^{j+1} }{((1-\mu) \l s )^{j+1} } \right) | y |_{\l s}=
\frac{3}{1-\mu-\frac{3N^1_{\l s}(u_n)}{\l s}} \frac{N^1_{\l s}(u_n)}{\l s} | y |_{\l s} ,$$
pour tout $y \in E_{\l s}$. En prenant $y=g_n x$, ceci nous donne l'estimation
$$ \| (e^{ u_n}-\Id) g_{n-1} \|_{\l \mu} \leq   \frac{3 C_\l}{1-\mu-\frac{3N^1_{\l s}(u_n)}{\l s}} \frac{N^1_{\l s}(u_n)}{\l s}.$$
 
La quantité 
$$K_{\l,\mu}:=\sup_{n \geq 0} \frac{3 C_\l}{1-\mu-\frac{3N^1_{\l s}(u_n)}{\l s}} $$
est finie car la suite $3N^1_{\l s}(u_n)$ tend vers $0$ lorsque $n$ tend vers l'infini. 
Nous avons donc montré l'estimation
$$\| g_n-g_{n-1} \|_{\l \mu} \leq K_{\l,\mu} \frac{N^1_{\l s}(u_n)}{\l s}.$$
Il ne nous reste plus qu'à utiliser l'inégalité triangulaire pour voir que $(g_n)$ définit une suite de Cauchy de l'espace de Banach $\Lt(E_s,E_{\mu \l s})$~:
$$\| g_{n+p}-g_{n}  \|_{\l \mu } \leq \sum_{i = 1}^{p}  \| g_{n+i}-g_{n+i-1}\|_{\l \mu  } \leq K_{\l,\mu}\left(\sum_{i = 1}^{p} \frac{3N^1_{\l s}(u_{n+i})}{\l s}\right).$$
Ceci montre bien que la suite $(g_n)$ converge vers un élément $g \in \Lt(E)$. On démontre de même que la suite $(h_n)$  définie par
$$h_n=e^{- u_0}e^{- u_1} \cdots e^{-u_n}$$ 
converge vers un élément $h \in \Lt(E)$. Pour tout $n \in \NM$, on a~:
$$g_nh_n=h_ng_n=\Id$$
 donc $gh=hg=\Id$. Ce qui montre que $h$ est l'inverse de $g$.
 Le théorème est démontré. 
 \end{proof}
 \section{D\'etermination finie sur un espace vectoriel \'echelonn\'e}
 \subsection{\'Enoncé du théorème et principe de la démonstration}
 \begin{theoreme}
 \label{T::groupes} Soit $E$ un espace vectoriel échelonné, $a \in E$, $M$ un sous-espace vectoriel fermé de $E$, $\alg$ un sous-espace vectoriel de $\Bt^1(E)^{(2)}$ qui préserve $M$, $G$ un sous-groupe fermé de $\Lt(E)$ contenant $\exp(\alg)$. Si
 l'application  
 $$\rho:\alg \to M,\ u \mapsto u \cdot a$$
  possède un inverse à droite borné alors l'orbite de $a$ sous l'action de $G$ est égale à $a+M$.
 \end{theoreme}

 Notons 
 $$j:E \mapsto \alg$$
   l'inverse de l'application $\rho$. Soit $b \in M$, on cherche $g \in G$ tel que  $g \cdot a=a+b$.
Pour cela, on  considère les suites $(b_n)$ et $(u_n)$ définies par~:
\begin{enumerate}[1)]
\item $b_{n+1}:=e^{-u_n}(a+b_n)-a$ ;
\item  $u_{n+1}:=j(b_{n+1}).$
\end{enumerate}
avec $ b_0=b,\ u_0=j(b) $.

Dans cette itération, la suite $(u_n)$ peut également être définie par la formule
$$u_{n+1}=j((e^{-u_n}(a+u_n\, a )-a)) $$
 
 Supposons que la suite formée par les produits
$$g_n:= e^{u_n} \dots e^{u_1}  e^{u_0} $$
converge vers une limite $g$ et que $(u_n)$ tende vers $0_{\alg}$. Dans ce cas, la suite $(x_n)=(g_n x)$ converge vers $a$. En effet, par définition de $j$, on a
$$ u_n(a)=b_n$$
et en passant à la limite sur $n$, dans les deux membres de l'égalité, on trouve 
$$0_E=b'.$$ 
En passant, maintenant à la limite dans l'égalité
$$g_n(a+b)=a+b_n$$
on trouve bien $g(a+b)=a$. CQFD.
 
Le théorème sera donc démontré pourvu que $(g_n)$ soit convergente et que $(u_n)$ tende vers $0_{\alg}$.  D'après le théorème \ref{T::produits},  il suffit pour cela de  montrer que la série
$ \sum_{n \geq 0} N^1_s(u_n)$ est majorée par $3s$ pour tout $s$ suffisamment petit, et comme nous allons le voir c'est un fait presque immédiat.

\subsection{Démonstration du théorème \ref{T::groupes}}
\label{SS::homogene_dem}

Soit $]0,S[$ l'intervalle d'éche\-lonnement de $E$.
 Quitte à remplacer $S$ par $S'<S$, on peut supposer que $a \in E_S$ et $S<1/2$. Par ailleurs, quitte à multiplier toutes les normes par une même constante, on peut
 supposer que
 $$|a |_S \leq 1.  $$
 \begin{lemme}
 \label{L::reste}
 Pour tout $\tau$-morphisme $1$-borné $u$ vérifiant la condition 
 $$\frac{3N^1_\tau(u)}{\tau-s} \leq \frac{1}{2}, $$
  on a l'inégalité~:
 $$| (e^{-u}(\Id +u)-\Id) a|_s \leq  \frac{1}{(\tau-s)^2} N^1_\tau(u)^2,$$
 pour tout $s \in ]0,\tau[$.
\end{lemme}  
\begin{proof}
On a l'égalité~:
$$ e^{-u}(\Id +u)-\Id=\sum_{n \geq 0} \frac{(n+1)}{(n+2)!}(-1)^{n+1}u^{n+2} \ ;$$
d'où l'estimation~: 
$$ | \sum_{n \geq 0} (-1)^{n+1} \frac{(n+1)}{(n+2)!}u^{n+2} ( a) |_s \leq \sum_{n \geq 0}  
\frac{(n+1)3^{n+2}}{(\tau-s)^{n+2}}N^1_\tau(u)^{n+2} $$
car $|a|_S\leq 1.$ Comme
$$\frac{3N^1_\tau(u)}{\tau-s} \leq 1 $$
le membre de droite est égal à
$$x^2\sum_{n \geq 0}(n+1)x^n=\frac{x^2}{(1-x)^2},\ {\rm \ avec\ }x= \frac{3N^1_\tau(u)}{\tau-s}.$$
En utilisant l'inégalité
$$\frac{x^2}{(1-x)^2} \leq 1,\ \forall x \in [0,\frac{1}{2}] ,$$
on trouve bien la majoration du lemme.
 \end{proof}
En prenant $\tau=2s$ dans le lemme, on voit, en particulier, que l'ordre de $u_n$ augmente strictement avec $n$.  

Fixons $k \in \NM$ tel que $j$ soit $k$-borné. Quitte à réduire l'intervalle d'échelonnement, on peut supposer que $j$ est un $S$-morphisme $k$-borné. Puisque l'ordre de $u_n$ augmente strictement avec $n$, on peut supposer, quitte à remplacer la suite $u_n$ par $u_{n+k+1}$, que $u_0$ est d'ordre $k+3$.

Considérons les suites $(\s_n)$ et $(s_n)$ définies par  
$$\s_n=\frac{s}{2^{n+2}},\ s_{n+1}=s_n-2\s_n,\ s_0=2s .$$

La fonction $ N^1_{2s}(u_0)$ décroit avec $s$ au moins à la vitesse de $s^{k+3}$. Il existe donc $\tau>0$ tel que pour tout $s \leq \tau$, l'inégalité suivante soit vérifiée au rang $n=0$~:
$$(*)\ N^1_{s_n}(u_n) \leq m\s_n^{k+2} . $$
 avec $m:=\frac{1}{2^{k+1}}\min(1,N^k_S(j))  $. 
 
 Montrons alors que l'inégalité $(*)$ est vérifiée pour tout $n \geq 0$. Pour cela, supposons qu'elle soit satisfaite au rang $n $. Appliquons alors le lemme avec
$$b_{n+1}:=(e^{-u_n}(\Id +u_n)-\Id)(a) $$
et $\tau-s=\s_n$. On obtient l'inégalité~:
$$| b_{n+1}|_{s_n-\s_n} \leq  \frac{1 }{\s_n^2} N^1_{s_n}(u_n)^2.$$
En utilisant l'hypothèse de récurrence et la définition de $m$, on obtient l'estimation~:
$$| b_{n+1}|_{s_n-\s_n} \leq \frac{m^2 \s_{n}^{2k+4}}{N^k(j)\s_n^2} \leq   m \s_{n+1}^{2k+2}$$
Comme $j$ est $k$-borné, et $u_{n+1}=j(b_{n+1})$, on en déduit l'inégalité~:
$$N^1_{s_{n+1}}(u_{n+1})\leq \frac{m \s_{n+1}^{2k+2}}{\s_n^k} \leq m \s_{n+1}^{k+2}. $$ 
On a donc bien
$$\sum_{n \geq 0}N^1_s(u_n) \leq \sum_{n \geq 0}N^1_{s_n}(u_n)<3s$$
pour tout $s \leq \tau$. Le théorème est démontré.

 \section{\'Echelonnements  en g\'eom\'etrie analytique}
 \label{S::analytique}
 \subsection{Généralités}
 Soit $\Ft$ un faisceau en espaces vectoriels topologiques définit sur un espace topologique $X$.  On appelle {\em fibre du faisceau $\Ft$ en un compact $K \subset X$}, noté $\Ft_K$, l'espace vectoriel 
 $$\Ft_K=\underrightarrow{\lim}\, \G(U,\Ft) $$
 où $U$ parcourt l'ensemble des ouverts contenant $K$ ordonné par l'inclusion.
   
 Un élément de $\Ft_K$ est une section du faisceau $\Ft$ au voisinage de $K$, pour laquelle on oublie de préciser la taille du voisinage de $K$ sur laquelle
 elle est définie.  Prendre la limite directe revient donc à identifier deux sections qui sont égales sur un ouvert contenant $K$~:
 $$f \sim g \iff \exists U \supset K,\ f_{\mid U}=g_{\mid U}.  $$ 
 Dans le cas où $K$ est réduit à un point, on retrouve la notion de germe en un point. Nous parlerons donc de germes de fonctions holomorphes en un compact. C'est une notion classique (voir par exemple \cite{Cartan_BSMF}). 
 
 En munissant les espaces vectoriels $ \G(U,\Ft)$ de la topologie de la convergence compacte, on munit la limite directe $\Ft_K$ d'une topologie. 
 
 L'espace topologique $\Ft_K$ peut s'obtenir comme limite directe d'espaces de Banach de la façon suivante.
Notons $\Vt$ l'ensemble des voisinages compacts de $K$, ordonné par l'inclusion.
Pour $K' \in \Vt$, on considère  l'espace vectoriel $B(K')$ des fonctions continues sur $K'$ qui sont holomorphes dans l'intérieur de $K'$.  C'est une espace de Banach pour la norme~: 
$$B(K') \to \RM,\ f \mapsto \sup_{z \in K'}| f(z)|.$$
L'espace vectoriel $\Ft_K$  est limite directe des $ B(K')$~:
$$\Ft_K =\underrightarrow{\lim}\, B(K'), \ K' \in \Vt. $$  

Nous allons à présent échelonner ces espaces vectoriels lorsque $K$ est un compact de Stein. D'après le théorème de plongement des espaces de Stein, on peut se limiter au cas des sous-espaces analytiques de 
$\CM^n$~\cite{Bishop_embedding,Narasimhan_embedding,Remmert_embedding,Wiegmann_embedding}.

Soit $K$ un compact de Stein  de  $\CM^n$  et $X$ un sous-espace analytique de $\CM^n$. Posons $K'=K \cap X$. D'après  le théorème $A$ de Cartan, on peut choisir une présentation du module $\Ot_{X,K'}$~:
$$\Ot_{\CM^n,K} \stackrel{C}{\to} \Ot_{\CM^n,K} \to  \Ot_{X,K'} \to 0$$
L'image de cette application est fermée (voir appendice), l'espace vectoriel $\Ot_{X,K'}$ se voit ainsi muni  d'une structure d'espace vectoriel échelonné.

On définit, alors, les échelonnements des fibres en $K'=K \cap X$ pour un faisceau analytique cohérent $\Ft$ sur $X$. Pour cela, on choisit une présentation de
$\Ft_{K'}$~:
$$\Ot_{X,K'}^p \stackrel{C}{\to} \Ot_{X,K'}^q \to  \Ft_{K'} \to 0,$$
et on prend sur $\Ft_{K'}$ la structure échelonnée induite par celle de $\Ot_{\CM^n,K}$. Ainsi chaque structure échelonné
sur $\Ot_{\CM^n,K}$ induit des structures échelonnés sur les fibres en $K'=K \cap X$ d'un faisceau cohérent sur $X$.

 \subsection{La structure échelonnée  $C^0$}
\label{SS::Banach}

Soit $K=(K_s),\  s \in [0,S]$ une famille exhaustive  de compacts  d'un espace analytique $X$.  

  L'espace vectoriel topologique $\Ot_{X,K_0}$ est alors échelonné par les espaces de Banach~:
  $$E_s:=C^0(K_s,\CM) \cap \G(\mathring{K_s},\Ot_X). $$

On induit ainsi une structure échelonné sur la fibre en $K_0$ de tout faisceau cohérent $\Ft$, définit sur un sous-espace analytique de $\CM^n$. Nous l'appellerons la structure $C^0$ associé à la famille $K$, nous la noterons $C^0(K,\Ft)$. On définit ainsi un foncteur de la catégorie des faisceaux cohérents sur $X$ vers celles des espaces vectoriels échelonnés~:
$$C^0(K,-):\ {\rm Coh}(X) \to EVE,\ \Ft \mapsto C^0(K,\Ft).$$ 
   
    \begin{proposition} Soit $(K_s)$ une suite exhaustive de compacts de Stein d'un espace analytique $X$ et $\Ft,\Gt$ des faisceaux analytiques cohérents sur $X$. Tout morphisme 
$$ \Ft_{K_0} \to \Gt_{K_0} $$
de $\Ot_{X,K_0}$-modules définit un morphisme $0$-borné 
$$C^0(K,\Ft) \to C^0(K,\Gt). $$
\end{proposition}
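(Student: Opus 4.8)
The statement asserts that a morphism $\phi: \Ft_{K_0} \to \Gt_{K_0}$ of $\Ot_{X,K_0}$-modules is automatically $0$-bounded for the $C^0$-scaled structures. The strategy is to reduce, via coherence and Cartan's Theorem A, to the case of free modules, where the claim becomes a quantitative statement about matrices of holomorphic functions, and then to invoke the Banach open mapping / closed graph mechanism to convert continuity into the $0$-bounded estimate on the Banach pieces.

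\emph{Step 1: Reduce to free modules.} Since $(K_s)$ is an exhaustive sequence of Stein compacts, each $\Ft_{K_0}$ and $\Gt_{K_0}$ admits a finite presentation by free $\Ot_{X,K_0}$-modules, and the scaled structure $C^0(K,\Ft)$ is by construction the one induced on the cokernel $\Ot_{X,K_0}^{q_1}/\mathrm{im}$ from $\Ot_{X,K_0}^{q_0}$ (and similarly for $\Gt$). By Proposition \ref{P::quotient}, a $0$-bounded morphism between the presenting free modules that is compatible with the presentations descends to a $0$-bounded morphism on the quotients, with no loss in the bound. So it suffices to lift $\phi$ to a morphism $\Ot_{X,K_0}^{q_0} \to \Ot_{X,K_0}^{q_1}$ of free modules — possible again by Theorem A, choosing images of generators and lifting — and to prove that \emph{this} lift is $0$-bounded. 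Thus we are reduced to: a morphism of free $\Ot_{X,K_0}$-modules, i.e. a $q_1 \times q_0$ matrix $C=(c_{ij})$ with entries $c_{ij}\in\Ot_{X,K_0}$, defines a $0$-bounded morphism $C^0(K,\Ot_X)^{q_0}\to C^0(K,\Ot_X)^{q_1}$.

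\emph{Step 2: The free case.} An element of $\Ot_{X,K_0}$ is represented by a function holomorphic on some neighbourhood of $K_0$; since $K_0\subset\mathring{K_{s_1}}$ for any $s_1>0$, the matrix entries $c_{ij}$ are represented by functions holomorphic on a neighbourhood of $K_{s_0}$ for some $s_0\in\,]0,S[$, hence $c_{ij}\in E_{s}$ for all $s\le s_0$, with $|c_{ij}|_s \le |c_{ij}|_{s_0} =: M$ for all such $s$ (the norms $|\cdot|_s$ are decreasing in $s$). Multiplication by $c_{ij}$ on $E_{s+\sigma}$ lands in $E_{s+\sigma}\subset E_s$ and satisfies $|c_{ij}\,f|_{s+\sigma}\le M\,|f|_{s+\sigma}\le M\,|f|_{s+\sigma}$, i.e. the supremum norm of a product over $K_{s+\sigma}$ is at most the product of suprema. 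Summing over the finitely many indices, $C$ sends $E_{s+\sigma}^{q_0}$ into $E_s^{q_1}$ with $|C\xi|_s \le (q_0 M)\,|\xi|_{s+\sigma}$ for all $s\in\,]0,s_0[$ and $\sigma\in\,]0,s_0-s]$. This is exactly the $0$-bounded estimate of Definition \ref{D::borne} with $\tau=s_0$, $k=0$, $C_{\mathrm{const}}=q_0 M$. Hence $C$ is a $\tau$-morphism that is $0$-bounded.

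\emph{Step 3: Conclude and handle the well-definedness of the bound.} Combining Steps 1 and 2: the lifted free-module morphism is $0$-bounded, so by Proposition \ref{P::quotient} applied to the commutative diagram comparing the two presentations, $\phi=\bar C$ is a $0$-bounded morphism $C^0(K,\Ft)\to C^0(K,\Gt)$, with $N^0_\tau(\phi)\le N^0_\tau(C)\le q_0 M$. The main point requiring care — and the only genuine obstacle — is Step 1: one must check that the scaled structure on $\Ft_{K_0}$ really is the quotient structure coming from a presentation by \emph{these particular} Banach spaces $E_s=C^0(K_s,\CM)\cap\G(\mathring{K_s},\Ot_X)$, and that lifting the module map $\phi$ to the free presentations (using coherence and Theorem A on the Stein neighbourhoods $\mathring{K_s}$) can be done by functions holomorphic on a fixed neighbourhood of some $K_{s_0}$, not merely germwise. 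Both are guaranteed by the Stein hypothesis on the $K_s$: Theorem A furnishes global generators on each Stein neighbourhood, and a diagram chase over a cofinal sequence of such neighbourhoods produces the lift on a fixed $\mathring{K_{s_0}}$. Once the lift lives in free modules over functions on a fixed neighbourhood, the estimate in Step 2 is elementary.
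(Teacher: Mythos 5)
Votre démonstration est correcte et suit essentiellement la même démarche que celle du texte : réduction au cas des modules libres via la proposition \ref{P::quotient}, puis constat que la multiplication par une fonction holomorphe (définie sur un voisinage fixe de $K_{s_0}$) est $0$-bornée parce que la norme du sup est sous-multiplicative, c'est-à-dire que les $C^0(K,\Ot_X)_s$ sont des algèbres de Banach. Vous explicitez simplement davantage le relèvement aux présentations libres et la réduction au cas matriciel, que le texte traite en une ligne.
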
   
\begin{proof}
D'après la proposition \ref{P::quotient},  il suffit  de montrer la proposition pour un morphisme de module libres
$$\Ot_{\CM^n,K_0}^p \to \Ot_{\CM^n,K_0}^q $$
La somme de morphismes $0$-borné étant $0$-borné, on peut se restreindre au cas $p=q=1$ ; auquel cas la proposition est évidente, car les espaces
$ C^0(K,\Ot_{\CM^n})_s$ sont des algèbres de Banach.
\end{proof}
  
 \subsection{La structure échelonnée  $L^2$}
Soit $X$ un espace analytique et $K=(K_s)$ une famille exhaustive de compacts de Stein de $X$. 
Les espaces de Hilbert
$$ L^2(K_s,\CM) \cap \G(\mathring{K_s},\Ot_X).$$
munissent  l'espace vectoriel topologique $\Ot_{X,K_0}$ d'un échelonnement.

Ceci définit pour tout faisceau cohérent $\Ft$, une  structure échelonnée sur $\Ft_K$ que nous noterons $L^2(K,\Ft)$. On a, à nouveau, un foncteur
de la catégorie des faisceaux cohérents sur un espace analytique vers celle des espaces vectoriels échelonnés.
 
L'inégalité de Cauchy-Schwarz montre que tout morphisme de module est $0$-borné, mais on a plus~:
\begin{proposition}
\label{P::inverse} Soit $K=(K_s)$ une suite exhaustive de compacts de Stein dans un espace analytique $X$ et $\Ft,\Gt$ des faisceaux analytiques cohérents. Tout morphisme surjectif
$$ \Ft_{K_0} \to \Gt_{K_0} $$
de $\Ot_{X,K_0}$-modules  admet un inverse $0$-borné 
$$L^2(K,\Gt) \to L^2(K,\Ft). $$
\end{proposition}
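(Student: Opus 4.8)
The plan is to reduce, exactly as in the proof of the $C^0$-statement, to the case of a morphism between free modules, and then to exploit the Hilbert-space structure to split it. First I would invoke Théorème $A$ of Cartan together with coherence of the kernel: a surjection $\Ft_{K_0} \to \Gt_{K_0}$ of $\Ot_{X,K_0}$-modules fits into a commutative diagram of presentations, so that up to the exact functoriality of the $L^2$-echelonnement (and Proposition \ref{P::quotient}, which passes $0$-boundedness to quotients) it suffices to treat a surjective morphism of free modules
$$\varphi : \Ot_{X,K_0}^p \to \Ot_{X,K_0}^q.$$
Here $X$ is a closed analytic subspace of $\CM^n$ and each $K_s$ is Stein, so $\mathring K_s$ is a Stein open set; the relevant Banach (Hilbert) spaces are $L^2(K_s) \cap \G(\mathring K_s, \Ot_X^{\,\bullet})$.

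Next I would build the inverse at the level of a fixed step of the echelle. Fix $s$. The point is that $\ker \varphi_s \subset L^2(K_s,\Ot_X)^p$ is a closed subspace of a Hilbert space, hence admits an orthogonal complement; composing $\varphi_s$ with the inclusion of that complement gives a continuous bijection onto its image, and the image is the whole of $L^2(K_{s'},\Ot_X)^q$ for a slightly smaller $s'$ — this is where Stein-ness enters, via Cartan's Théorème $B$ / Hörmander-type $L^2$ solvability of the equations $\varphi(\xi)=\eta$ with $L^2$ control of the solution on a relatively compact Stein subdomain. The open mapping theorem then yields a bounded right inverse $j_{s',s} : L^2(K_{s'},\Ot_X)^q \to L^2(K_s,\Ot_X)^p$ of norm controlled independently of the finitely many data, and by the usual diagonal/compactness argument over the echelle these assemble into a morphism $j : L^2(K,\Gt) \to L^2(K,\Ft)$. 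Since $j$ increases the parameter only by a bounded (indeed arbitrarily small) amount and the norms are uniformly bounded, $j$ is a $\tau$-morphism that is $0$-bounded: $|j(\eta)|_s \le C\,|\eta|_{s+\sigma}$ with $C$ independent of $\sigma$, which is precisely $0$-boundedness in the sense of Définition \ref{D::borne} with $k=0$.

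The main obstacle is the uniformity of the constant $C$ across the echelle, i.e. ensuring that the norm of the right inverse $j_{s',s}$ does not blow up as $s \to 0$ (or as $s'-s \to 0$). For a single fixed presentation $\varphi$ this is where one must be careful: one wants the $L^2$-estimate for the solution of $\varphi(\xi)=\eta$ to hold with a constant depending only on the coefficients of $\varphi$ and on the geometry of the exhausting family $(K_s)$, not on $s$ itself. I would handle this by passing to the closed image (so that the Banach open mapping theorem applies cleanly) and by using that the $(K_s)$ form an exhausting family of Stein compacts, so that the $L^2$-division constants vary continuously and hence stay bounded on the compact parameter interval; alternatively one invokes the already-established fact (cf. the discussion preceding \S\ref{SS::Banach}) that the image of $C$ is closed in the echelonné sense, which packages exactly the needed uniform closedness. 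Once that is in hand the reduction to $p=q=1$ is not available (surjectivity of a $1\times 1$ map of free modules is too special), so unlike the $C^0$ case one genuinely works with the $q$-tuples, but the Hilbert orthogonal-projection argument costs nothing extra there.
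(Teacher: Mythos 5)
Your construction at a single fixed level $s$ is exactly the paper's: the kernel of $u_s$ is closed in the Hilbert space $L^2(K,\Ft)_s$, one restricts to its orthogonal complement, and the open mapping theorem gives a continuous right inverse $\s$ of $u_s$. But the step you yourself flag as ``the main obstacle'' --- uniformity of the operator norm of the right inverse as $s$ varies --- is precisely where your argument has a genuine gap. The claim that the $L^2$-division constants ``vary continuously and hence stay bounded on the compact parameter interval'' is unproved and cannot be salvaged as stated: the relevant interval is $]0,\tau]$, the definition of a $0$-born\'e morphism requires a single constant valid for \emph{all} $s$ and $\s$, and nothing in what you write prevents the norm of $j_{s',s}$ from blowing up as $s\to 0$. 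Likewise the appeal to ``the usual diagonal/compactness argument over the \'echelle'' does not produce a morphism in the sense of D\'efinition \ref{D::borne}; it only produces level-by-level inverses with uncontrolled constants.

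The paper resolves this with one idea that is absent from your proposal: multiplication induces canonical isomorphisms
$$L^2(K,\Ot_X)_{s'}\,\hat\otimes\, L^2(K,\Ft)_s \;\approx\; L^2(K,\Ft)_{s'},\qquad
L^2(K,\Ot_X)_{s'}\,\hat\otimes\, L^2(K,\Gt)_s \;\approx\; L^2(K,\Gt)_{s'}$$
for $s'\leq s$, and under these identifications $\id_{s'}\otimes\,\s$ is a continuous section of $u_{s'}$ whose norm \emph{equals} that of $\s$. Thus the single section built at level $s$ is transported to every smaller level with an unchanged constant, which is exactly the uniform $0$-boundedness required. Note also that the paper works directly with the coherent sheaves $\Ft,\Gt$ (their $L^2$ fibres are already Hilbert spaces as quotients of Hilbert spaces by closed subspaces), so no reduction to free modules, and no Cartan B / H\"ormander-type solvability, is needed. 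Without the tensor-product transport step, or some equally effective substitute, your proof does not establish the proposition.
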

\begin{proof}
Choisissons $s$ assez petit pour que le morphisme définisse, par restriction, une application linéaire continue 
$$u_s:L^2(K,\Ft)_s \to L^2(K,\Gt)_s.$$
Le noyau de cette application est un sous-espace fermé $F_s$. D'après le théorème de l'image ouverte, la restriction de $u_s$ à l'orthogonal de $F_s$
est un isomorphisme d'espaces de Hilbert. Notons $v_s$ son inverse. On obtient ainsi un inverse à droite linéaire et continu de $u_s$~:
$$\s:L^2(K,\Gt)_s \to L^2(K,\Ft)_s=F_s \oplus F_s^{\perp},\ x \mapsto (0,v_s(x)) .  $$
Il ne reste plus qu'à observer que, pour $s' \leq s$, la multiplication donne des isomorphismes canoniques\footnote{On note $\hat \otimes$ le produit tensoriel topologique projectif voir~\cite{Grothendieck_these,Grothendieck_PTT,Schatten}.}
$$L^2(K,\Ot_X)_{s'}  \hat \otimes L^2(K,\Ft)_s  \approx  L^2(K,\Ft)_{s'} ,\ 
L^2(K,\Ot_X)_{s'}  \hat \otimes L^2(K,\Gt)_s  \approx  L^2(K,\Gt)_{s'} . $$
Via ces isomorphismes, l'application 
$$\id_{s'} \otimes \s:  L^2(K,\Ot_X)_{s'}  \hat \otimes L^2(K,\Gt)_s  \to  L^2(K,\Ot_X)_{s'}  \hat \otimes L^2(K,\Ft)_s  $$
s'identifie à une section continue du morphisme 
$$u_{s'} :L^2(K,\Ft)_{s'} \to L^2(K,\Gt)_{s'} .$$
Comme la norme de $ \id_{s'} \otimes \s$ est égale à celle de $\s$ ceci conclut la démonstration de la
proposition.
\end{proof}

 \subsection{Recouvrements échelonnés}
\label{SS::echelon}
Soit $(K_s) \subset \CM^n,\ s \in [0,S]$ une famille exhaustive de compacts. Munissons $\CM^n$ de coordonnées $(z_1,\dots,z_n)$ et
notons $P$ le polycylindre
$$P=\{ z \in \CM^n : | z_i | \leq 1,\ i=1,\dots,n \}. $$
\begin{definition}
Une famille exhaustive de compact $(K_s)$ est appelée un recouvrement échelonné si pour tout point $z \in K_s$ le polydisque $z+\s P$ est contenu dans le compact  $K_{s+\s}$, pour tous $s \in [0,S[$ et $\s \in [0,S-s[$.
\end{definition}
Dans le cas plus général d'un espace analytique $X$, nous dirons qu'une famille exhaustive de compacts est un {\em recouvrement échelonné}
si on peut plonger $X$ dans $\CM^n$ de telle sorte que cette famille soit obtenue comme intersection d'un recouvrement échelonné de $\CM^n$ avec $X$.

Nous allons construire des recouvrements échelonnés de la façon suivante. Soit
 $$\psi:\CM^n \supset \Omega \to [0,S], S \in [0,+\infty[$$ une fonction propre de classe $C^1$  définie sur un ouvert $\Omega \subset \CM^n$
 dont la dérivée est bornée.  Munissons  $\CM^n$ de la norme $\max_{i =1,\dots,n}| \cdot |$ et soit $\| \cdot \|$ la norme d'opérateur dans $L(\CM^n,\CM)$. Soit  $M$ un majorant de la norme des dérivées de $\psi$~:
 $$\sup_{z \in \Omega} \| D\psi(z)\| \leq M  $$
\begin{lemme} {La famille de compacts $K=(K_s)$ avec $K_s:=\psi^{-1}([0,Ms])$ est un recouvrement échelonné.}
\end{lemme}
\begin{proof}
Soit $z \in K_s$, la formule de Taylor donne~:
$$\psi(z+\s \dt)=\psi(z)+(\int_{t=0}^{t=1} D\psi(z+t\s \dt) dt)\s\dt,\ \dt \in P .$$
On a bien~:
$$  \psi(z+\s\dt)  \leq   \psi(z+\s\dt)  +  \sup_{t \in [0,1]} \| D\psi(z +t\s \dt)\| \s \leq Ms+M\s,$$
ce qui démontre le lemme.
  \end{proof}
 Si un recouvrement $K=(K_s)$ peut-être définit comme dans le lemme, et si de plus la fonction $\psi$ est pluri-sousharmonique, nous dirons que $K$ est {\em un recouvrement Stein-échelonné.}  
\subsection{Comparaisons des échelonnements $C^0$ et $L^2$} 
Soit un faisceau cohérent $\Ft$ sur un espace analytique $X$ et $K$ une  famille exhaustive de compacts .
Comme toute fonction continue sur un compact est intégrable, l'application identité de $\Ot_{X,K_0}$ dans lui-même induit un morphisme $0$-borné
$$C^0(K,\Ft) \to L^2(K,\Ft) $$ 
\begin{proposition} Si $K=(K_s)$ est un recouvrement Stein-échelonné d'un espace analytique $X$, l'identité de $\Ot_{X,K_0}$ induit un morphisme $1$-borné
$$ L^2(K,\Ft)  \to C^0(K,\Ft). $$
\end{proposition}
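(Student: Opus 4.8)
Le point de départ est la réduction déjà utilisée partout dans cette section : grâce à la proposition \ref{P::quotient} et à une présentation $\Ot_{X,K_0}^p \to \Ot_{X,K_0}^q \to \Ft_{K_0} \to 0$, le morphisme $1$-borné cherché se construit sur les modules libres et passe aux quotients. Il suffit donc de traiter le cas $\Ft=\Ot_X$, c'est-à-dire de montrer que l'application identité induit, pour tout $s\in]0,S[$ et tout $\s\in]0,S-s]$, une application linéaire continue
$$L^2(K,\Ot_X)_{s+\s}=L^2(K_{s+\s},\CM)\cap\G(\mathring{K_{s+\s}},\Ot_X)\ \to\ C^0(K,\Ot_X)_s=C^0(K_s,\CM)\cap\G(\mathring{K_s},\Ot_X)$$
avec une norme au plus $C\s^{-1}$ pour une constante $C$ indépendante de $s,\s$.

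\textbf{Étape 1 : estimation de Bergman locale.} Pour une fonction holomorphe $h$ sur un polydisque $z+\rho P$ de $\CM^n$, la propriété de la moyenne sur les polydisques donne $h(z)=\mathrm{vol}(\rho P)^{-1}\int_{z+\rho P}h$, donc par Cauchy-Schwarz $|h(z)|\leq \mathrm{vol}(\rho P)^{-1/2}\|h\|_{L^2(z+\rho P)}\leq c_n\,\rho^{-n}\|h\|_{L^2(z+\rho P)}$. C'est l'inégalité sous-jacente à toute reproduction de Bergman : une fonction holomorphe $L^2$ est ponctuellement contrôlée par sa norme $L^2$ sur un petit polydisque, avec une perte en puissance négative du rayon.

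\textbf{Étape 2 : utilisation du recouvrement Stein-échelonné.} C'est ici qu'intervient l'hypothèse sur $K$. Par définition d'un recouvrement échelonné, pour tout $z\in K_s$ le polydisque $z+\s P$ est contenu dans $K_{s+\s}$ ; donc pour $f\in L^2(K,\Ot_X)_{s+\s}$ la restriction de $f$ à $z+\s P$ est holomorphe et de carré intégrable, et l'étape 1 donne
$$|f(z)|\ \leq\ c_n\,\s^{-n}\,\|f\|_{L^2(z+\s P)}\ \leq\ c_n\,\s^{-n}\,\|f\|_{L^2(K_{s+\s})}.$$
En prenant le supremum sur $z\in K_s$ on obtient $|f|_s\leq c_n\,\s^{-n}\,|f|_{s+\s}$ au sens des normes $C^0$ et $L^2$. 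Ceci prouve déjà que l'identité est un morphisme $n$-borné ; c'est presque l'énoncé, mais avec l'exposant $n$ et non $1$.

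\textbf{Étape 3 : passer de l'exposant $n$ à l'exposant $1$ (le point délicat).} Pour gagner l'exposant optimal $1$, l'idée standard est de ne pas perdre $\s$ d'un coup mais par tranches. On écrit $\s=\s_1+\dots+\s_n$ avec $\s_i=\s/n$, mais cela ne suffit pas seul ; ce qu'il faut c'est que, dans un recouvrement Stein-échelonné associé à une fonction plurisousharmonique propre $\psi$, on dispose d'estimées $L^2$ de type Hörmander sur $K_{s+\s}$ : la plurisousharmonicité de $\psi$ permet de résoudre $\bar\partial$ avec poids et, par récurrence sur la dimension via une itération en une variable, de transformer l'estimée $L^2$ en une estimée $L^2$-vers-$L^\infty$ ne perdant qu'une seule puissance de $\s$. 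Concrètement, on majore $|f(z)|$ en appliquant successivement la formule de la moyenne en chacune des $n$ coordonnées, mais en intégrant à chaque pas seulement sur un disque de rayon $\s$ dans une seule variable et en bornant le résultat intermédiaire en norme $L^2$ dans les variables restantes ; les sous-polydisques emboîtés restent dans $K_{s+\s}$ grâce à la propriété d'échelonnement, et la plurisousharmonicité garantit que chaque moyenne partielle est encore contrôlée. Le bilan est une seule perte $\s^{-1}$, soit $|f|_s\leq C\,\s^{-1}|f|_{s+\s}$, ce qui est exactement la $1$-bornitude voulue. La difficulté principale est donc de rendre rigoureuse cette amélioration de l'exposant : il s'agit d'exploiter non pas la seule inclusion des polydisques (étape 2) mais vraiment le caractère plurisousharmonique de la fonction définissant le recouvrement, via les techniques $L^2$ de Hörmander, pour que la constante $C$ soit uniforme en $s$ et $\s$.
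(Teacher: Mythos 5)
Your Steps 1 and 2 coincide with the paper's own proof: reduction to $\Ft=\Ot_{\CM^n}$ via la proposition \ref{P::quotient}, then, for $z\in K_s$, the inclusion $z+\s P\subset K_{s+\s}$ furnished by the definition of a recouvrement \'echelonn\'e, combined with the sub-mean-value inequality $|f(z)|\le \mathrm{vol}(\s P)^{-1/2}\,\|f\|_{L^2(z+\s P)}$ (orthogonality of the homogeneous components of $f$ on the polydisc, or equivalently mean value plus Cauchy--Schwarz). Note that the plurisubharmonicity of $\psi$ plays no role at this stage; only the \'echelonn\'e inclusion is used. You are also right, and more careful than the paper, about the resulting exponent: since $\mathrm{vol}(\s P)=\pi^n\s^{2n}$ in $\CM^n$, this argument gives $|f|_s\le \pi^{-n/2}\s^{-n}|f|_{s+\s}$, i.e.\ $n$-boundedness. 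The paper's displayed identity $\int_{z+\s P}|f|^2\,dV=\sum_j|a_j|^2\s^{2j+2}$ is the one-variable computation; in $n$ variables the zeroth term contributes $|a_0|^2\s^{2n}$ up to a constant, so the paper's proof as written also establishes only $n$-boundedness.

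Your Step 3, however, is not a proof, and the strategy it sketches cannot be made to work, because the $1$-bounded estimate is false for $n\ge 2$. Take $X=\CM^n$, $K_s$ the polydisc of polyradius $s$ (the model \'echelonn\'e covering), and $f=(z_1\cdots z_n)^N$ with $N\approx s/(2\s)$: then $|f|_{C^0(K_s)}=s^{nN}$ while $\|f\|_{L^2(K_{s+\s})}^2=\pi^n(s+\s)^{2nN+2n}(N+1)^{-n}$, so the ratio $|f|_{C^0(K_s)}/\|f\|_{L^2(K_{s+\s})}$ is of order $(s\s)^{-n/2}$; taking $\s=s^2$ and letting $s\to 0$, this exceeds any fixed multiple of $\s^{-1}$ as soon as $n\ge 2$. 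The obstruction is already present in this model situation, so no $\bar\partial$ or H\"ormander-type argument exploiting the plurisubharmonicity of $\psi$ can restore the exponent $1$, and the slicing idea you describe ("one variable at a time") necessarily loses a factor $\s^{-1}$ per variable. The honest conclusion is that the exponent in the statement should be $n$ rather than $1$, and then your Step 2 is a complete proof. This is harmless for the sequel: le corollaire \ref{C::inverse} then produces an $n$-bounded right inverse instead of a $1$-bounded one, and the proof of th\'eor\`eme \ref{T::groupes} only uses that the right inverse $j$ is $k$-bounded for some $k$.
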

\begin{proof}
En vertu de la proposition \ref{P::quotient}, il suffit de montrer la proposition pour le faisceau structural des espaces vectoriel $\CM^n$, $n \in \NM$.
Pour $z \in U_s$ et $\s$ fixés, on pose
$$f(z+\s \dt)=\sum_{j \geq 0} a_j \s^j,\ a_j \in \CM^n. $$
Par un calcul direct, on obtient
$$\int_{z+\s P} | f(z)|^2 dV=\sum_{j \geq 0} |a_j|^2 \s^{2j+2} . $$
Comme le recouvrement $K$ est échelonnée, le polycylindre $z+\s P$ est contenu dans $ K_{s+\s}$ donc
$$\int_{z+\s P} | f(z)|^2 dV \leq \int_{K_{s+\s}} | f(z)|^2 dV=| f |_{s+\s}^2  $$
On en déduit les inégalités~:
$$|a_0|= |f(z) | \leq  \s^{-1}\left( \int_{z+\s P} | f(z)|^2 dV \right)^{1/2} \leq  \s^{-1} | f |_{s+\s}.$$
 Ce qui démontre la proposition.  
 \end{proof}
 En combinant cette proposition avec la proposition \ref{P::inverse}, on obtient le
 \begin{corollaire}
 \label{C::inverse} Soit $K=(K_s)$ un recouvrement Stein-échelonné dans un espace analytique $X$ et $\Ft,\Gt$ des faisceaux analytiques cohérents. Tout morphisme surjectif
$$ \Ft_{X,K_0} \to \Gt_{X,K_0} $$
de $\Ot_{X,K_0}$-modules admet un inverse $1$-borné 
$$C^0(K,\Gt) \to C^0(K,\Ft). $$
 \end{corollaire}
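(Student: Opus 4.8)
Le plan est de composer trois morphismes dont on dispose d\'ej\`a. Notons $u : \Ft_{K_0} \to \Gt_{K_0}$ le morphisme surjectif de $\Ot_{X,K_0}$-modules donn\'e, et gardons la m\^eme lettre pour les morphismes qu'il induit entre les structures $C^0(K,-)$ d'une part, entre les structures $L^2(K,-)$ d'autre part.

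Tout d'abord, toute fonction continue sur un compact y \'etant int\'egrable, l'identit\'e de $\Gt_{K_0}$ induit un morphisme $0$-born\'e $\alpha : C^0(K,\Gt) \to L^2(K,\Gt)$. Ensuite, la proposition \ref{P::inverse} fournit un inverse \`a droite $0$-born\'e $\sigma : L^2(K,\Gt) \to L^2(K,\Ft)$ du morphisme $u$ au niveau des structures $L^2$. Enfin, puisque $K$ est un recouvrement Stein-\'echelonn\'e, la proposition pr\'ec\'edente montre que l'identit\'e de $\Ft_{K_0}$ induit un morphisme $1$-born\'e $\beta : L^2(K,\Ft) \to C^0(K,\Ft)$. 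On dispose ainsi de
$$C^0(K,\Gt) \stackrel{\alpha}{\to} L^2(K,\Gt) \stackrel{\sigma}{\to} L^2(K,\Ft) \stackrel{\beta}{\to} C^0(K,\Ft) ,$$
et $w := \beta \circ \sigma \circ \alpha$ est, d'apr\`es la r\`egle de composition des morphismes born\'es (lemme \ref{L::morphismes}), un morphisme $(0+0+1)$-born\'e, donc $1$-born\'e ; quitte \`a r\'eduire l'intervalle d'\'echelonnement, ces trois morphismes sont des $\tau$-morphismes pour un m\^eme $\tau$, ce qui l\'egitime la formation de $w$.

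Il reste \`a v\'erifier que $w$ est un inverse \`a droite du morphisme $C^0(K,\Ft) \to C^0(K,\Gt)$ induit par $u$, au sens de la proposition \ref{P::inverse}. Or tous les espaces vectoriels \'echelonn\'es en jeu ont pour espace vectoriel topologique sous-jacent $\Ft_{K_0}$ ou $\Gt_{K_0}$, et les applications $\alpha$, $\beta$, $u$ (sur chacune des deux \'echelles) ainsi que $\sigma$ proviennent toutes d'applications lin\'eaires de ces espaces sous-jacents ; sur ceux-ci, $\alpha$ et $\beta$ ne sont autres que l'identit\'e et $\sigma$ est une section de $u$. Par cons\'equent $u \circ w$ est \'egal, comme application de $\Gt_{K_0}$ dans lui-m\^eme, \`a $u \circ \sigma$, donc \`a l'identit\'e. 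Comme deux morphismes d'espaces vectoriels \'echelonn\'es induisant la m\^eme application lin\'eaire sur l'espace sous-jacent sont \'egaux, on conclut que $w$ est bien un inverse \`a droite, ce qui ach\`eve la d\'emonstration.

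Je n'anticipe donc aucune v\'eritable difficult\'e : tout le contenu a d\'ej\`a \'et\'e \'etabli, dans la proposition \ref{P::inverse} (qui s'appuie sur le th\'eor\`eme de l'image ouverte et les th\'eor\`emes de coh\'erence de Cartan) et dans la comparaison $L^2 \to C^0$ (qui repose sur l'in\'egalit\'e de la moyenne sur les polydisques d'un recouvrement \'echelonn\'e). Le seul point auquel je pr\^eterai attention est de bien suivre ce que d\'esigne le mot ``inverse'' : composer un inverse \`a droite obtenu sur les structures $L^2$ avec les morphismes induits par l'identit\'e redonne un inverse \`a droite sur les structures $C^0$, pr\'ecis\'ement parce que ces morphismes recouvrent les m\^emes applications lin\'eaires sous-jacentes.
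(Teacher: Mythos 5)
Votre d\'emonstration est correcte et suit essentiellement la m\^eme voie que le texte, qui obtient le corollaire pr\'ecis\'ement ``en combinant'' la proposition \ref{P::inverse} avec la comparaison $L^2 \to C^0$ : vous ne faites qu'expliciter la composition $C^0(K,\Gt) \to L^2(K,\Gt) \to L^2(K,\Ft) \to C^0(K,\Ft)$ et le d\'ecompte $(0+0+1)$-born\'e que l'auteur laisse au lecteur.
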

 \subsection{Cas réel}
Supposons la variété $X $ munie d'une involution anti-holomorphe 
$$\tau:X \to X $$
et $K$ est un compact contenu dans $X_{\RM }$.  L'espace $\Rt_{X,K }$ des germes de fonctions analytiques réelles le long de $K$ est un sous-espace vectoriel topologique fermé de~$\Ot_{X ,K}$. Il hérite par conséquent des structures échelonnées de $\Ot_{X ,K}$.
 
La partie réelle d'une sous-variété $X \subset \CM^n$ de Stein définie par des fonctions analytiques
 $$g_1,\dots,g_n:X \to \CM ,$$ admet des recouvrements Stein-échelonnés.
Il suffit, en effet, de poser\footnote{Cette fonction m'a été suggérée par P. Dingoyan.}~: 
 $$ \psi=\sum_{i=1}^n | g_i|+\sum_{i=1}^n | z-\tau (z) |$$ 
et de considérer un recouvrement associé à $\psi$ comme dans \ref{SS::echelon}.  Ces recouvrements sont invariants par l'involution $\tau$.
 \subsection{Démonstration des théorèmes \ref{T::det} et  \ref{T::det_bis}}
On muni l'espace vectoriel $\Ot_{X,K}$ d'un échelonnement $C^0$ provenant d'un recouvrement Stein-échelonné.

Par hypothèse, l'application
 $$\Mt_K^2 \otimes_{\Ot_{X,K}} \Der_{\Ot_{X,K}}(I) \to  \Ot_{X,K},\ a \otimes v \mapsto av(f)$$
 contient l'idéal $I^{\nu}$ dans son image. On note $\alg$ la préimage de cet idéal. Les éléments de $\alg$ sont d'ordre
 $2$ et ce module s'identifie canoniquement à un sous-espace d'applications $1$-bornés.
 
 On applique le théorème \ref{T::groupes} avec $a=f$, $E=\Ot_{X,K}$, $M=I^{\nu}$ et $\alg$ comme décrit ci-dessus. D'après le corollaire \ref{C::inverse}, le théorème \ref{T::groupes} s'applique, ce qui démontre le théorème \ref{T::det}.
 
Pour la démonstration du théorème \ref{T::det_bis} on procède de la même manière~: l'application
 $$ \Der_{\Ot_{X,K}}(\Ot_{X,K}) \to  \Ot_{X,K}$$
 contient l'idéal $\Mt_K^{\mu}$ dans son image.  On note $V$ la préimage de cet idéal. Les éléments de $\alg=\Mt_K^2 \otimes V$ sont d'ordre
 $2$ et ce module s'identifie canoniquement à un sous-espace d'applications $1$-bornés. Les inclusions
 $$ \Mt_K^2 \otimes_{\Ot_{X,K}} \Der_{\Ot_{X,K}} (\Ot_{X,K})  \subset  \Mt_K \otimes_{\Ot_{X,K}} \Der_{\Ot_{X,K}} (\Ot_{X,K}) \subset  \Der_{\Ot_{X,K}} (\Mt_K )$$
 montre que $\Mt_K^{\mu+2}$ est stable par $\alg$.
 Le théorème \ref{T::det_bis} est donc également une conséquence du théorème \ref{T::groupes}.
 
\appendix
   \section{Sur l'image d'un morphisme de $\Ot_{X,K}$-modules}
   \subsection{Le théorème de Banach-Köthe}
Toute application linéaire continue entre espaces de Fréchet (localement convexe, métrisable) est ouverte. C'est le {\em théorème de l'image ouverte} appelé aussi {\em théorème de Banach.}
 
 Considérons un espace vectoriel $E$ qui soit l'union dénombrable strictement croissante de sous-espaces de Fréchet~:
 $$E=\bigcup_{n \in \NM} E_n,\ E_{n} \subsetneq E_{n+1}.$$

 Les inclusions $E_n \subset E_{n+1}$ donnent un système direct
 $$0 \to E_0 \to E_1 \to E_2 \to \dots $$
dont $E$ est la limite ; il est donc muni d'une structure d'espace vectoriel topologique. L'espace vectoriel topologique $E$ est réunion dénombrable des $E_i$ qui sont d'intérieur vide, ce n'est donc pas un espace de Baire. En particulier, la topologie de $E$ n'est pas métrisable et le théorème de l'image ouverte ne s'applique pas. On a toutefois le
 \begin{theor}[\cite{Koethe_Banach}] Soit $E$ un espace vectoriel union dénombrable d'espaces de Fréchet. Si $E$ est un espace complet pour la topologie induite par ses sous-espaces de Fréchet alors toute application linéaire continue surjective est ouverte.
 \end{theor}
  
\subsection{\'Enonc\'e du résultat}
Rappelons qu'une application linéaire continue entre espaces vectoriels topologiques $u:E \to F$ est appelée  {\em stricte} si elle induit un isomorphisme
 d'espaces vectoriels topologiques entre $E/\Ker u$ et $\Im u$ \cite{Bourbaki_topologie,Bourbaki_EVT}. 
 \begin{pro}
\label{P::strict}
Soit  $K \subset \CM^n$ un compact et $M,N$ deux  modules de type fini sur l'anneau $\Ot_{\CM^n,K}$. Toute application $\Ot_{\CM^n,K}$-linéaire de $M$ vers $N$ est stricte.  
\end{pro}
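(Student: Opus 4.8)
Pour \'etablir cette proposition, le plan est de traduire l'\'enonc\'e en termes de limites inductives d'espaces de Fr\'echet, puis d'invoquer le th\'eor\`eme de Banach--K\"othe, la coh\'erence et le th\'eor\`eme $B$ de Cartan assurant le contr\^ole topologique. L'observation de base est que les espaces vectoriels $\Ot_{\CM^n,K}^q$ — ainsi que leurs quotients par un sous-module ferm\'e — sont des limites inductives compactes d'espaces de Banach (espaces de Silva), les restrictions entre les \'etages de Banach \'etant compactes par le th\'eor\`eme de Montel. Un tel espace est complet et r\'eunion d\'enombrable d'espaces de Banach ; le th\'eor\`eme de Banach--K\"othe~\cite{Koethe_Banach} s'y applique, de sorte que toute bijection lin\'eaire continue entre deux tels espaces est un hom\'eomorphisme. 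On utilisera aussi le fait qu'une partie convexe d'un espace de Silva est ferm\'ee d\`es que sa trace sur chaque \'etage de Banach l'est.

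Le point central est le lemme suivant : \emph{l'image d'un morphisme $u:\Ot_{\CM^n,K}^p\to\Ot_{\CM^n,K}^q$ de modules libres est ferm\'ee}. \'Ecrivons $u$ comme la multiplication par une matrice $C$ holomorphe sur un voisinage $W$ de $K$ et posons $\Jt=\Im(C:\Ot_W^p\to\Ot_W^q)$, faisceau coh\'erent par le th\'eor\`eme de coh\'erence d'Oka. Pour tout voisinage de Stein $V\subset W$ de $K$, on v\'erifie l'\'egalit\'e
$$\Im u\cap\G(V,\Ot^q)=\{\,s\in\G(V,\Ot^q):\ s_x\in\Jt_x\ \text{pour tout}\ x\in K\,\}.$$
L'inclusion $\subset$ est imm\'ediate. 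R\'eciproquement, comme $\Ot^q/\Jt$ est coh\'erent, le lieu $\{x:s_x\in\Jt_x\}$ est ouvert, donc contient un voisinage de Stein de $K$ sur lequel $s$ est une section de $\Jt$ ; le th\'eor\`eme $B$ de Cartan y r\'esout l'\'equation $Cb=s$, d'o\`u $s\in\Im u$. Or le second membre est ferm\'e dans le Fr\'echet $\G(V,\Ot^q)$ : c'est l'intersection, pour $x$ parcourant $K$, des noyaux des applications lin\'eaires continues $s\mapsto \bar s_x\in\Ot^q_x/\Jt_x$, \`a valeurs dans des espaces s\'epar\'es (modules de type fini sur l'anneau local analytique $\Ot_x$). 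Le crit\`ere de fermeture rappel\'e plus haut montre alors que $\Im u$ est ferm\'e dans $\Ot_{\CM^n,K}^q$.

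On en d\'eduit la proposition. Soit $u:M\to N$ un morphisme et choisissons des pr\'esentations $\Ot_{\CM^n,K}^{q_0}/R\stackrel{\sim}{\to}M$ et $\Ot_{\CM^n,K}^{q}/S\stackrel{\sim}{\to}N$. Le lemme, appliqu\'e aux morphismes de modules libres d\'efinissant $R$ et $S$, montre que ceux-ci sont ferm\'es ; $M$ et $N$ sont donc des espaces de Silva, et $u$, qui se rel\`eve en une multiplication matricielle $\tilde u:\Ot_{\CM^n,K}^{q_0}\to\Ot_{\CM^n,K}^{q}$ v\'erifiant $\tilde u(R)\subset S$, est continu. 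Le sous-module $\Im\tilde u+S$ est l'image du morphisme de modules libres $(\tilde u,\iota):\Ot_{\CM^n,K}^{q_0}\oplus\Ot_{\CM^n,K}^{r}\to\Ot_{\CM^n,K}^{q}$, o\`u $\iota$ pr\'esente $S$ ; il est donc ferm\'e par le lemme, et, contenant $S$, il d\'efinit un sous-espace ferm\'e $\Im u=(\Im\tilde u+S)/S$ de $N$. Ainsi $\Ker u$ est ferm\'e dans $M$ et $\Im u$ dans $N$, de sorte que $M/\Ker u$ et $\Im u$ (muni de la topologie induite) sont deux espaces de Silva ; la bijection lin\'eaire continue canonique $M/\Ker u\to\Im u$ est alors un hom\'eomorphisme d'apr\`es le th\'eor\`eme de Banach--K\"othe, ce qui signifie exactement que $u$ est stricte.

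L'obstacle principal est le lemme sur les modules libres : c'est l\`a que doivent se combiner la coh\'erence, le th\'eor\`eme $B$ de Cartan et la structure d'espace de Silva ; une fois ce lemme acquis, le cas g\'en\'eral s'en d\'eduit formellement.
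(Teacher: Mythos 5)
Votre d\'emonstration est correcte dans son esprit mais suit une route enti\`erement diff\'erente de celle du texte, et elle ne couvre pas, telle quelle, la g\'en\'eralit\'e de l'\'enonc\'e. Le texte r\'eduit d'abord la question, via le lemme sur les sommes de morphismes stricts et la d\'ecomposition d'une matrice en applications de rang un, au cas de la multiplication par un seul \'el\'ement $a\in\Ot_{\CM^n,K}$~; la fermeture de l'image est alors \'etablie \`a la main, par restriction aux droites complexes et un argument \'el\'ementaire \`a une variable ($a=z^db$ avec $b(0)\neq 0$), sans aucun recours \`a la coh\'erence. Vous attaquez au contraire directement le cas d'une matrice quelconque \`a l'aide du th\'eor\`eme de coh\'erence d'Oka, du th\'eor\`eme $B$ de Cartan et de la s\'eparation de $\Ot_x^q/\Jt_x$~: c'est essentiellement la d\'emonstration classique \`a laquelle le texte renvoie (Malgrange), compl\'et\'ee par Banach--K\"othe pour obtenir le caract\`ere strict. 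Votre version est plus robuste --- elle traite d'embl\'ee les matrices, l\`a o\`u le lemme de r\'eduction du texte est le point d\'elicat --- mais elle r\'eintroduit pr\'ecis\'ement la machinerie locale dont le texte annonce vouloir se passer~: la s\'eparation de $\Ot_x^q/\Jt_x$ n'est autre que le th\'eor\`eme de fermeture des sous-modules de type fini de $\Ot_x^q$ (Cartan), ingr\'edient non trivial qu'il faudrait au moins citer explicitement.

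Deux r\'eserves concr\`etes. D'une part, votre lemme central exige des voisinages de Stein arbitrairement petits de $K$~: il faut que le lieu ouvert $\{x: s_x\in\Jt_x\}$, qui contient $K$, contienne un ouvert de Stein contenant $K$ o\`u appliquer le th\'eor\`eme $B$. Or la proposition porte sur un compact quelconque de $\CM^n$, et tout compact n'admet pas de syst\`eme fondamental de voisinages de Stein (la sph\`ere unit\'e de $\CM^2$ n'en admet aucun, tout voisinage de Stein devant contenir la boule par le ph\'enom\`ene de Hartogs). Votre preuve ne vaut donc que pour les compacts de Stein --- ce qui couvre toutes les utilisations de la proposition dans le texte, mais pas l'\'enonc\'e tel qu'il est formul\'e, et ce qui la distingue de l'argument par restriction aux droites, valable sans cette hypoth\`ese. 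D'autre part, le passage aux modules de type fini suppose que les modules de relations $R$ et $S$ soient eux-m\^emes de type fini pour pouvoir leur appliquer le lemme~; cela repose sur la noeth\'erianit\'e de $\Ot_{\CM^n,K}$ (Frisch, Siu), valable pour un compact de Stein mais qu'il faudrait invoquer. Le reste de l'argument (structure d'espace de Silva des quotients, fermeture de $\Im \tilde{u}+S$, bijection continue $M/\Ker u\to\Im u$ rendue ouverte par Banach--K\"othe) est correct.
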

C'est un résultat classique~(voir par exemple~\cite{Malgrange_analytic}). Nous allons voir que le théorème de Banach-Köthe permet d'en donner une dé\-monstra\-tion alternative.   
\begin{proof}
Il suffit de démontrer   la proposition pour $M=\Ot_{\CM^n,K}^p$ et $N=\Ot_{\CM^n,K}^q$.  Commençons par le
   \begin{lem}
   \label{L::algebre} Soit $A$ une algèbre topologique. Si pour tout $x \in A$ l'image de la multiplication par $x$ est stricte alors toute application $A$-linéaire $A^n \to A^p$ est également stricte.
 \end{lem}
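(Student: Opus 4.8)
Soit $u : A^n \to A^p$ une application $A$-linéaire. Elle est donnée par une matrice $(u_{ij}) \in A^{p \times n}$, de sorte que $u(x_1,\dots,x_n) = \bigl(\sum_j u_{1j}x_j,\dots,\sum_j u_{pj}x_j\bigr)$. L'idée est de factoriser $u$ comme composée d'applications élémentaires dont chacune hérite de la stricité, puis de vérifier que la stricité se propage le long de telles compositions et sommes. Concrètement, chaque composante $x \mapsto u_{ij}x$ est, par hypothèse, stricte ; il s'agit de montrer que les opérations par lesquelles on les assemble — produit de modules, somme d'applications vers un même facteur, inclusion et projection canoniques — préservent la stricité. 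La première chose que je ferais est donc de réduire le problème, par additivité, aux deux briques de base : la multiplication scalaire $m_x : A \to A$, $a \mapsto xa$, et les morphismes \og de structure\fg{} $A^n \to A$ (projections) et $A \to A^p$ (injections), qui sont trivialement stricts car ce sont des facteurs directs topologiques.

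Le point technique central est le suivant : si $v : E \to F$ et $w : F \to G$ sont stricts \emph{et que l'image de $v$ est un sous-espace fermé de $F$}, alors $w\circ v$ est stricte ; et de même, si $v_1,\dots,v_k : E \to F$ sont stricts d'images fermées, alors $\sum_i v_i$ est stricte pourvu que l'image de la somme soit fermée. C'est ici que le caractère fermé des images — qui découle précisément de la proposition \ref{P::strict} dans le cas d'une variable, ou qui doit être établi en parallèle — est indispensable : sans fermeture, le quotient $E/\Ker$ pourrait ne pas s'injecter comme sous-espace topologique. Je m'attends à ce que ce soit l'obstacle principal, car la stricité n'est pas stable par composition dans une catégorie abstraite d'espaces vectoriels topologiques ; c'est la structure très particulière des modules de type fini sur $\Ot_{\CM^n,K}$ (unions dénombrables d'espaces de Fréchet, auxquelles s'applique le théorème de Banach--Köthe) qui garantit à la fois que les images sont fermées et que les isomorphismes algébriques $E/\Ker u \to \Im u$ sont automatiquement des homéomorphismes.

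Une fois cette boîte à outils en place, la démonstration se termine mécaniquement. J'écrirais $u$ comme composée
$$A^n \xrightarrow{\ \Delta\ } (A^n)^p \xrightarrow{\ \bigoplus_i (\sum_j m_{u_{ij}}\circ \mathrm{pr}_j)\ } A^p,$$
où $\Delta$ est la diagonale ; ou plus simplement, par récurrence sur $p$, en traitant d'abord le cas $p=1$ (où $u = \sum_j m_{u_{1j}}\circ \mathrm{pr}_j$ est une somme finie de composées de projections et de multiplications scalaires, toutes strictes d'image fermée) puis en recollant les composantes via l'inclusion $A \hookrightarrow A^p$. À chaque étape, l'appartenance des modules intermédiaires à la classe des unions dénombrables d'espaces de Fréchet est préservée (c'est stable par produit fini et par passage au quotient par un sous-espace fermé), donc le théorème de l'image ouverte de Banach--Köthe s'applique et fournit la continuité de l'inverse algébrique. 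Cela conclut la preuve du lemme, et par suite celle de la proposition \ref{P::strict} dans le cas libre, puis dans le cas général par présentation finie.
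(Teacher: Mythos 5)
Votre strat\'egie est, dans ses grandes lignes, celle du texte~: \'ecrire $u$ comme somme finie d'applications de rang $1$, chacune obtenue en composant une projection, une multiplication scalaire $m_x$ (stricte par hypoth\`ese) et une inclusion, puis propager le caract\`ere strict \`a travers ces op\'erations. La d\'emonstration de l'article proc\`ede exactement ainsi~: elle \'etablit que $(u,v)$ et $u+v$ sont stricts d\`es que $u$ et $v$ le sont, en restreignant $u\times v$ \`a la diagonale puis en composant avec l'addition $F\times F\to F$, et elle s'appuie pour cela sur deux \'enonc\'es de stabilit\'e (restriction d'un morphisme strict \`a un sous-espace ferm\'e, composition de morphismes stricts) affirm\'es sans condition, ce qui maintient le lemme dans le cadre d'une alg\`ebre topologique arbitraire. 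Vous avez raison de vous m\'efier de ces \'enonc\'es, qui ne sont pas vrais pour des espaces vectoriels topologiques quelconques.

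Mais la fa\c{c}on dont vous les contournez laisse une lacune r\'eelle. D'une part, en faisant reposer la composition et la somme sur le th\'eor\`eme de Banach--K\"othe et sur la fermeture des images, vous ne d\'emontrez plus le lemme tel qu'il est \'enonc\'e~: une alg\`ebre topologique g\'en\'erale n'est pas une union d\'enombrable d'espaces de Fr\'echet et le th\'eor\`eme de l'image ouverte n'y est pas disponible (au demeurant, m\^eme avec l'image de $v$ ferm\'ee, la composition de deux morphismes stricts n'est pas automatiquement stricte hors d'un cadre o\`u un tel th\'eor\`eme s'applique). D'autre part, et surtout, la fermeture que vous exigez pour l'image de $\sum_j m_{u_{1j}}\circ\mathrm{pr}_j$ est celle de l'id\'eal engendr\'e par $u_{11},\dots,u_{1n}$~; or c'est pr\'ecis\'ement le corollaire que l'appendice d\'eduit \emph{de} la proposition, et non un fait disponible \`a ce stade --- seule la fermeture des id\'eaux principaux $a\,\Ot_{\CM^n,K}$ y est \'etablie ind\'ependamment. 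La renvoyer \`a une v\'erification \og en parall\`ele \fg{} sans la faire rend l'argument circulaire. Pour conclure, il vous faut soit d\'emontrer les \'enonc\'es de stabilit\'e du texte dans la g\'en\'eralit\'e o\`u il les utilise, soit renoncer au lemme formel et \'etablir directement la fermeture des sous-modules de type fini de $\Ot_{\CM^n,K}^p$.
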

\begin{proof} 
 Soit $u,v:E \to F$ deux morphismes stricts, je dis qu'alors
\begin{enumerate}[{\rm i)}]
\item $(u,v):E \to F \times F,\ x \mapsto (u(x),v(x))$ est strict ;
\item  $u+v:E \to F,\ x \mapsto u(x)+v(x)$ est strict.
\end{enumerate}
La restriction d'un morphisme strict à un sous-espace vectoriel fermé est à nouveau un morphisme strict.  Donc la
restriction à diagonale $\D$ de l'application
 $$w:E \times E \to F \times F,\ (x,y) \mapsto (u(x),v(y)) $$
 est stricte. Ce qui démontre i). 
 
 Considérons le morphisme strict
 $$s:F \times F \to F,\ (x,y) \mapsto x+y .$$
 La composée de deux morphismes stricts est stricte donc $s \circ w_{| \D}=u+v$ est strict. Ce qui démontre ii). 
 
 Comme toute application linéaire est une somme finie d'applications de rang 1, les affirmations i) et ii) entraînent immédiatement le lemme.
 \end{proof}
D'après le théorème de Banach-Köthe, il nous reste donc à montrer que  la multiplication par $a $ est
d'image fermée, pour tout $a \in \Ot_{\CM^n,K}$. 

Pour cela, considérons deux suites de germes en $K$ de fonctions holomorphes   $(y_k), (x_k)$ avec $y_k=a x_k$. Il s'agit de prouver que si $(y_k)$  est convergente alors $(x_k)$ l'est également. Pour cela, il suffit de démontrer que, pour chaque droite complexe $L \subset \CM^n $, la restriction des $x_k$ à $L$ définit une suite convergente. On est ainsi ramené au cas $n=1$. Si la limite existe elle est unique, il suffit donc de vérifier la propriété localement.
 
 Comme les zéros d'une fonction holomorphe d'une variable sont isolés et comme la propriété d'être holomorphe est locale, on peut  supposer que $K=\{ 0 \} $. Posons alors
$$a(z)=z^db(z),\ b(0) \neq 0,$$
on a 
$$g_k(z)=z^dh_k(z),\ w_k(0) \neq 0.$$ Ce qui montre que la suite $(f_k)$ s'écrit sous la forme 
 $$f_k=\frac{h_k}{b},\ b(0) \neq 0.$$
 La suite $(h_k)$ étant convergente, ceci démontre la convergence de la suite $(f_k)$. La proposition est démontrée.
\end{proof} 
Soit $I \subset  \Ot_{\CM^n,K}$ un idéal engendré par $f_1,\dots,f_k \in  \Ot_{\CM^n,K}$.
En appliquant la proposition à l'application
$$\Ot_{\CM^n,K}^k \to  \Ot_{\CM^n,K},\ (a_1,\dots,a_k) \mapsto \sum_{i=1}^k a_i f_i ,$$
on obtient le
\begin{coro} Tout idéal de l'espace vectoriel topologique $ \Ot_{\CM^n,K}$ définit un sous-espace vectoriel fermé.
\end{coro}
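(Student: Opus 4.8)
The plan is to realise $I$ as the image of a strict morphism of topological vector spaces and then to deduce closedness from completeness. Fix generators $f_1,\dots,f_k$ of $I$ (legitimate for every ideal, $\Ot_{\CM^n,K}$ being noetherian), and consider the $\Ot_{\CM^n,K}$-linear map
$$u:\Ot_{\CM^n,K}^k\to\Ot_{\CM^n,K},\qquad (a_1,\dots,a_k)\mapsto\sum_{i=1}^{k}a_if_i,$$
whose image is precisely $I$. Since $\Ot_{\CM^n,K}^k$ and $\Ot_{\CM^n,K}$ are modules of finite type over $\Ot_{\CM^n,K}$, Proposition~\ref{P::strict} applies: $u$ is strict, i.e. the induced continuous bijection
$$\bar u:\Ot_{\CM^n,K}^k/\Ker u\to I\subset\Ot_{\CM^n,K}$$
is an isomorphism of topological vector spaces, $I$ being endowed with the topology induced from $\Ot_{\CM^n,K}$.

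It then suffices to show that the source of $\bar u$ is complete. The subspace $\Ker u$ is closed, being the preimage of $\{0\}$ under a continuous map into a Hausdorff space, so $\Ot_{\CM^n,K}^k/\Ker u$ is Hausdorff. Now $\Ot_{\CM^n,K}$ is a countable inductive limit of Banach spaces --- those of continuous functions on a compact neighbourhood of $K$ that are holomorphic on its interior --- whose linking (restriction) maps are compact by Montel's theorem; hence $\Ot_{\CM^n,K}$, and likewise $\Ot_{\CM^n,K}^k$, is a (DFS)-space. In this class a Hausdorff quotient by a closed subspace is again a (DFS)-space, in particular complete. Transporting completeness along $\bar u$, the subspace $I$ of the Hausdorff space $\Ot_{\CM^n,K}$ is complete, and a complete subspace of a Hausdorff topological vector space is closed. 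This gives the assertion.

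The one step that is not purely formal is the completeness of $\Ot_{\CM^n,K}^k/\Ker u$: a quotient of a general (LB)-space by a closed subspace may fail to be complete, so one genuinely uses that $\Ot_{\CM^n,K}$ is a (DFS)-space --- a countable inductive limit of Banach spaces with compact linking maps --- a class stable under finite direct sums, closed subspaces and Hausdorff quotients, the same circle of ideas as the Banach--K\"othe open mapping theorem recalled above. One may alternatively run this last step directly through that theorem, applied to $\Ot_{\CM^n,K}^k\to I$ once $I$ is known to be of the admissible type; in either formulation, fixing generators, invoking strictness, and passing from ``complete'' to ``closed'' are routine.
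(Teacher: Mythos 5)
Your proof is correct and follows the same route as the paper: both realise $I$ as the image of the presentation map $(a_1,\dots,a_k)\mapsto\sum_{i}a_if_i$ and invoke la Proposition~\ref{P::strict}. You additionally spell out the step the paper leaves implicit --- that strictness yields closedness because $\Ot_{\CM^n,K}^k/\Ker u$ is complete, $\Ot_{\CM^n,K}$ being a (DFS)-espace stable par sous-espaces ferm\'es, quotients et sommes finies --- which is exactly the intended completion of the argument; the only caveat is that your appeal to the noeth\'erianit\'e of $\Ot_{\CM^n,K}$ for an arbitrary compact $K$ is delicate (it is a theorem of Frisch--Siu for de bons compacts de Stein), but the paper itself only treats finitely generated ideals, so this does not affect the substance.
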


\noindent {\bf Remerciements.}{ Merci à P. Dingoyan et  J. Féjoz pour leur aide, ainsi qu'à J.-C. Yoccoz pour m'avoir signalé une erreur dans la démonstration initiale du théorème \ref{T::produits}.} 
\bibliographystyle{plain-fr}
\bibliography{master}
 \end{document}